\documentclass{amsart}

\usepackage{charter}
\paperheight=297mm
\paperwidth=210mm
\textwidth=136mm
\textheight=205mm

\usepackage[a4paper,top=46mm,bottom=46mm,left=37mm,right=37mm]{geometry}

\usepackage{latexsym,amsfonts} 
\usepackage{amsmath,amssymb,graphics,setspace}
\usepackage{mathtools}
\usepackage{amsthm}
\usepackage{mathrsfs} 
\usepackage{tikz-cd} 
\usepackage{graphicx}
\usepackage{fancyhdr,txfonts,pxfonts}
\usepackage{paralist}
\usepackage{environ}
\usepackage{marvosym}
\usepackage[all,cmtip]{xy}

\usepackage{subfigure}
\renewcommand{\thesubfigure}{\thefigure.\arabic{subfigure}}
\makeatletter
\renewcommand{\p@subfigure}{}
\renewcommand{\@thesubfigure}{\thesubfigure:\hskip\subfiglabelskip}
\makeatother

\usepackage{pstricks,pst-text,pst-grad,pst-node,pst-3dplot,pstricks-add,pst-poly,pst-coil} 
\usepackage{pst-fun,pst-blur} 

\usepackage{hyperref}
\hypersetup{linktocpage=true,colorlinks=true,linkcolor=blue,citecolor=blue,pdfstartview={XYZ 1000 1000 1}}

\newcommand{\sd}{\mbox{SD}}
\newcommand{\tc}{\mbox{TC}}
\newcommand{\dis}{\mbox{D}}
\newcommand{\scat}{\mbox{scat}}
\newcommand{\cat}{\mbox{cat}}
\newcommand{\id}{\mbox{id}}

\newcommand{\ssd}{\mbox{sd}}

\newtheorem{example}{Example}
\newtheorem{remark}{Remark}
\newtheorem{definition}{Definition}

\newtheorem{theorem}{Theorem}
\newtheorem{proposition}{Proposition}
\newtheorem{corollary}{Corollary}

\begin{document} 
\title{Contiguity distance between simplicial maps}

\author[Ayse Borat]{Ayse Borat}
\address{
	Department of Mathematics, Bursa Technical University, Bursa, Turkey}
\email{ayse.borat@btu.edu.tr}

\author[Mehmetcik Pamuk]{Mehmetcik Pamuk}
\address{Department of Mathematics, Middle East Technical University, Ankara, Turkey}
\email{mpamuk@metu.edu.tr}

\author[Tane Vergili]{Tane Vergili}
\address{Department of Mathematics, Karadeniz Technical University, Trabzon, Turkey}
\email{tane.vergili@ktu.edu.tr}

\subjclass[2010]{55M30; 55U10; 55U05 }

\begin{abstract}
We study properties of contiguity distance between simplicial maps. In particular, we show that simplicial versions of $LS$-category and topological complexity are particular cases of this more general notion. 
\end{abstract}

\keywords{contiguity distance, homotopic distance, topological complexity, Lusternik-Schnirelmann category}

\date{\today}

\maketitle
\tableofcontents

\section{Introduction}

The Lusternik-Schnirelmann category, introduced by Lusternik and Schnirelmann \cite{LS}, is an important numerical invariant concerning the critical points of smooth functions on manifolds.


\begin{definition}\cite{CLOT, LS} Lusternik Schnirelmann category of a space $X$, denoted by $\cat(X)$, is the least non-negative integer $k$ if there are open subsets $U_0, U_1, \ldots, U_k$ which cover $X$ such that each inclusion map $\iota_i:U_i\hookrightarrow X$ is nullhomotopic in $X$ for $i=0,1,\ldots,k$.
\end{definition}

Topological complexity of a topological space introduced by Farber \cite{F} is another numerical invariant closely related to motion planning problems.

\begin{definition}\cite{F} Let $\pi: PX\rightarrow X\times X$ be the path fibration. Topological complexity of a space $X$, denoted by $\tc(X)$, is the least non-negative integer $k$ if there are open subsets $U_0, U_1, \ldots, U_k$ which cover $X\times X$ such that on each $U_i$ there exists a continuous section of $\pi$ for $i=0,1,\ldots, k$.
\end{definition}

Although these invariants seem independent, they are similar in nature both being homotopy invariants. Macias-Virgos and Mosquera-Lois  \cite{VM:2019}  introduced homotopic distance, a notion   generalizing both $\cat$ and $\tc$. One of the advantages of homotopic distance is that since it is a number related to functions rather than spaces as in $\cat$ and $\tc$, we have the opportunity to investigate the behaviour of the homotopic distance under compositions which is not possible to do with $\cat$ and $\tc$. This feature leads us to prove the known $\tc$- and $\cat$-related theorems in an easy way.

\begin{definition}\cite{VM:2019} Let $f,g: X\rightarrow Y$ be continuous maps. Homotopic distance between $f$ and $g$, denoted by $\dis(f,g)$, is the least non-negative integer $k$ if there are open subsets $U_0, U_1, \ldots, U_k$ which cover $X$ such that $f|_{U_i}\simeq g|_{U_i}$ for all $i=0,1,\ldots,k$.
\end{definition}

In this paper, we consider the combinatorial objects, the simplicial complexes, and study the distance  between two simplicial maps adapted from homotopic distance. In fact one can consider a geometric realization of a  simplicial complex and study the ordinary homotopic distance between continuos maps induced by the geometric realization of simplicial maps. However, we opt to stay in the simplicial category in order not to loose the combinatorial aspects.  To do this, we consider a simplicial analogue of homotopic distance  between simplicial maps which relies on the contiguity. Then the simplicial analogues of $\cat$ and $\tc$ of a simplicial complex can be defined in terms of this distance. However we want to remark that the contiguity distance between simplicial maps and the homotopic distance between their corresponding geometric realizations might differ, see Example~\ref{ex:strict}. \\

Given a set $V$, an abstract simplicial complex with a vertex set $V$ is a set $K$ of finite subsets of $V$ such that the elements of $V$ belongs to $K$ and for any $\sigma \in K$ any subset of $\sigma$ belongs to $K$. The elements of $K$ are called the faces or the simplices of $K$. The dimension of an abstract  
simplex is just its cardinality minus 1 and the dimension of $K$ is the largest dimension of its simplices. For further details on abstract simplicial complexes, we refer to \cite{Kozlov:2008, S}.\\

The combinatorial description of any geometric simplicial complex $\tilde{K}$ obviously gives rise to an abstract simplicial complex $K$. One can always associate a geometric simplicial complex $\tilde{K}$ to an abstract simplicial complex $K$ in such a way that the combinatorial description of $\tilde{K}$ is the same as $K$ so that the underlying space of $\tilde{K}$ is homeomorphic to the geometric realization $|K|$. As a consequence, abstract simplicial complexes can be seen as topological spaces and geometric complexes can be seen as geometric realizations of their underlying combinatorial structure. So, one can consider simplicial complexes at the same time as combinatorial objects that are well-suited for effective computations and as topological spaces from which topological properties can be inferred.\\

It is a classical result that an arbitrary continuous map between geometric realizations of simplicial complexes can be deformed (after sufficiently many subdivisions) 
to a simplicial map, known as the simplicial approximation theorem.  But in general, simplicial approximations to a given continuous map are not unique.  An analogue 
of homotopy, called contiguity, is defined for simplicial maps so that different simplicial approximations to the same continuous map are contiguous.\\

\begin{definition}
Let $\varphi, \psi \colon K \to K'$ be two simplicial maps between simplicial complexes. We say that $\varphi$ and $\psi$ are \emph{contiguous}, denoted $\varphi \sim_c \psi$, 
provided  for a simplex $\sigma=\{v_0,\dotsc,v_n\}$ in $K$, the set of vertices $\varphi(\sigma) \cup \psi(\sigma)=\{ \varphi(v_0), \dotsc, \varphi(v_n), \psi(v_0), \dotsc, \psi(v_n) \}$ 
constitutes a simplex in $K'$.\\
\end{definition} 

For simplicial complexes and simplicial maps, the notion of contiguity can be considered as the discrete version of homotopy.
Being contiguous is a combinatorial condition which defines a reflexive and symmetric relation among simplicial maps.  On the other hand, this relation is not transitive.  
There is however an equivalence relation in the set of simplicial maps and the corresponding equivalence classes are called contiguity classes.\\

\begin{definition}
We say that two simplicial maps $\varphi, \psi \colon K\to K'$ are in the same contiguity class, denoted by $\varphi \sim \psi$, provided there exists a finite sequence of simplicial maps 
$\varphi_i \colon K\to K'$, $i=1,\dotsc,m$, such that $\varphi=\varphi_1\sim_c \varphi_2 \sim_c \cdots \sim_c \varphi_m=\psi$.  \\ 
\end{definition}


 Barmak and Minian \cite{Bar:2011, barmak-minian}  introduced the notion of strong collapse, a particular type of collapse which is specially adapted to the simplicial structure. Actually, it can be modelled as a simplicial map,
 in contrast with the standard concept of collapse which is not a simplicial map in general:  For a simplicial complex $K$, suppose that there is a pair of simplices $\sigma < \tau$ in $K$ such that $\sigma$ is a face of $\tau$ and $\sigma$ has no other cofaces. Such a simplex $\sigma$   is called a free face of $\tau$. Then the simplicial complex $K - \{\sigma, \tau\}$ is complex called an elementary collapse of $K$ (see Figure~\ref{fig:domination}). The action of collapsing is denoted $K \searrow K - \{\sigma, \tau\}$. The inverse of an elementary collapse is called an elementary expansion.  \\

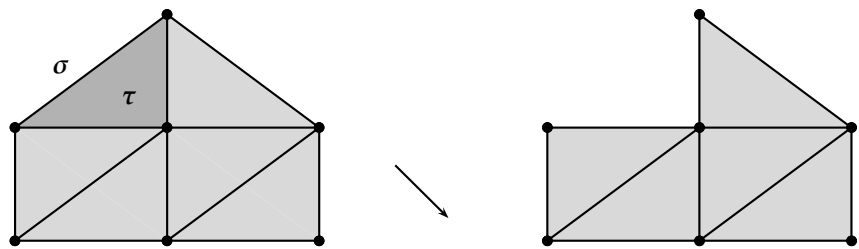
\begin{figure}[!ht]
	\centering
	\begin{pspicture}
	(-5.8,-2.7)(5.8,0.7)
	\psline*[linestyle=solid,linecolor=gray!30]%
	(-5.5,-1)(-3.5,-2.5)(-1.5,-2.5)(-3.5,-1)(-5.5,-1)(-5.5,-1)
	\psline*[linestyle=solid,linecolor=gray!30]%
	(-5.5,-1)(-5.5,-2.5)(-3.5,-2.5)(-5.5,-1) 
	\psline*[linestyle=solid,linecolor=gray!30]%
	(-3.5,-1)(-1.5,-2.5)(-1.5,-1)(-3.5,-1)
	\psline*[linestyle=solid,linecolor=gray!30]%
	(-1.5,-1)(-3.5,0.5)(-3.5,-1)(-1.5,-1)
	\psline*[linestyle=solid,linecolor=gray!60]%
	(-5.5,-1)(-3.5,-1)(-3.5,0.5)(-5.5,-1)
	\psline[linestyle=solid,linecolor=black]%
	(-5.5,-2.5)(-3.5,-2.5)(-1.5,-2.5)(-1.5,-1)(-3.5,0.5)(-5.5,-1)(-3.5,-1)(-1.5,-1)(-3.5,-2.5)(-3.5,-1)
	\psline[linestyle=solid,linecolor=black]%
	(-5.5,-1)(-5.5,-2.5)(-3.5,-1)(-3.5,0.5)
	\psdots[dotstyle=o,dotsize=4pt,linewidth=4pt,linecolor=black,fillcolor=black!100]%
	(-5.5,-2.5)(-3.5,-2.5)(-1.5,-2.5)(-5.5,-1)(-3.5,-1)(-1.5,-1)(-3.5,0.5)
	
	\rput(-4.9, -0.2){$\boldsymbol{\sigma}$}
	\rput(-4, -0.6){$\boldsymbol{\tau}$}
	
	\psline*[linestyle=solid,linecolor=gray!30]%
	(1.5,-2.5)(3.5,-1)(1.5,-1)(1.5,-2.5)
	\psline*[linestyle=solid,linecolor=gray!30]%
	(1.5,-2.5)(3.5,-1)(3.5,-2.5)(5.5,-1)(3.5,-1)
	\psline*[linestyle=solid,linecolor=gray!30]%
	(1.5,-2.5)(3.5,-2.5)(3.5,-1)(1.5,-2.5)
	\psline*[linestyle=solid,linecolor=gray!30]%
	(3.5,-2.5)(5.5,-2.5)(5.5,-1)(3.5,-2.5)
	\psline*[linestyle=solid,linecolor=gray!30]%
	(3.5,-1)(5.5,-1)(3.5,0.5) (3.5,-1)
	\psline[linestyle=solid,linecolor=black]%
	(1.5,-2.5)(3.5,-2.5)(5.5,-2.5)(5.5,-1)(3.5,0.5)(3.5,-1)(1.5,-1)(1.5,-2.5)(3.5,-1)(5.5,-1)(3.5,-2.5)(3.5,-1)
	\psdots[dotstyle=o,dotsize=4pt,linewidth=4pt,linecolor=black,fillcolor=black!100]%
	(1.5,-2.5)(3.5,-2.5)(5.5,-2.5)(1.5,-1)(3.5,-1)(5.5,-1)(3.5,0.5)
	
	\psline[arrows=->](-0.5,-1.5)(0.2,-2.2)	
	
	\end{pspicture}
	\caption[]{An elementary collapse}
	\label{fig:domination}
\end{figure}

A vertex $v$ of a simplicial complex $K$ is dominated by another vertex $v'$ if every maximal simplex that contains $v$ also contains $v'$. An elementary strong collapse consists of removing the open star of a dominated vertex $v$ from a simplicial complex $K$. The inverse of an elementary strong collapse is called an elementary strong expansion. A finite sequence of elementary strong collapses (expansions) is called a strong collapse (expansion), see Figure~\ref{fig:strongcollapse}. \\

\begin{figure}[!ht]
	\centering
	\begin{pspicture}
	(-5.8,-2.7)(5.8,0.7)
	\psline*[linestyle=solid,linecolor=gray!30]%
	(-5.5,-1)(-3.5,-2.5)(-1.5,-2.5)(-3.5,-1)(-5.5,-1)(-5.5,-1)
	\psline*[linestyle=solid,linecolor=gray!30]%
	(-5.5,-1)(-5.5,-2.5)(-3.5,-2.5)(-5.5,-1) 
	\psline*[linestyle=solid,linecolor=gray!30]%
	(-3.5,-1)(-1.5,-2.5)(-1.5,-1)(-3.5,-1)
	\psline*[linestyle=solid,linecolor=gray!70]%
	(-1.5,-1)(-3.5,0.5)(-3.5,-1)(-1.5,-1)
	\psline*[linestyle=solid,linecolor=gray!70]%
	(-5.5,-1)(-3.5,-1)(-3.5,0.5)(-5.5,-1)
	\psline[linestyle=solid,linecolor=black]%
	(-5.5,-2.5)(-3.5,-2.5)(-1.5,-2.5)(-1.5,-1)(-3.5,0.5)(-5.5,-1)(-3.5,-1)(-1.5,-1)(-3.5,-2.5)(-3.5,-1)
	\psline[linestyle=solid,linecolor=black]%
	(-5.5,-1)(-5.5,-2.5)(-3.5,-1)(-3.5,0.5)
	\psdots[dotstyle=o,dotsize=4pt,linewidth=4pt,linecolor=black,fillcolor=black!100]%
	(-5.5,-2.5)(-3.5,-2.5)(-1.5,-2.5)(-5.5,-1)(-3.5,-1)(-1.5,-1)(-3.5,0.5)
	
	\rput(-3.25, -0.75){$\boldsymbol{v'}$}
	\rput(-3.3, 0.6){$\boldsymbol{v}$}

	\psline*[linestyle=solid,linecolor=gray!30]%
	(1.5,-2.5)(3.5,-1)(1.5,-1)(1.5,-2.5)
	\psline*[linestyle=solid,linecolor=gray!30]%
	(1.5,-2.5)(3.5,-1)(3.5,-2.5)(5.5,-1)(3.5,-1)
	\psline*[linestyle=solid,linecolor=gray!30]%
	(1.5,-2.5)(3.5,-2.5)(3.5,-1)(1.5,-2.5)
	\psline*[linestyle=solid,linecolor=gray!30]%
	(3.5,-2.5)(5.5,-2.5)(5.5,-1)(3.5,-2.5)
	\psline[linestyle=solid,linecolor=black]%
	(1.5,-2.5)(3.5,-2.5)(5.5,-2.5)(5.5,-1)
	\psline[linestyle=solid,linecolor=black](3.5,-1)(1.5,-1)(1.5,-2.5)(3.5,-1)(5.5,-1)(3.5,-2.5)(3.5,-1)
	\psdots[dotstyle=o,dotsize=4pt,linewidth=4pt,linecolor=black,fillcolor=black!100]%
	(1.5,-2.5)(3.5,-2.5)(5.5,-2.5)(1.5,-1)(3.5,-1)(5.5,-1)
	
	\psline[arrows=->](-0.5,-1.5)(0.2,-2.2)	
	\psline[arrows=->] (-0.3,-1.5)(0.4,-2.2)
	\rput(3.7, -0.7){$\boldsymbol{v'}$}
	\end{pspicture}
	\caption[]{An elementary strong collapse}
	\label{fig:strongcollapse}
\end{figure}
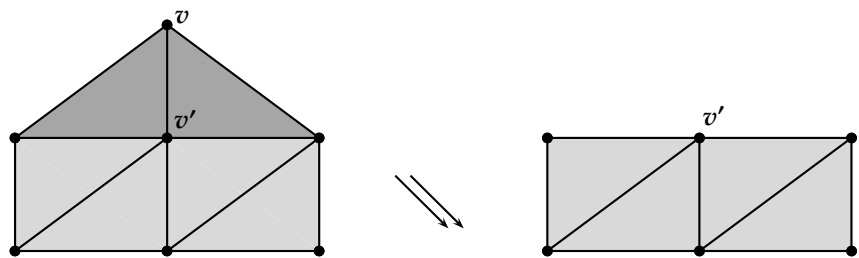

Two simplicial complexes $K$, $K'$ have the same strong homotopy type, denoted by $K \sim K'$, if they are related by a sequence of strong collapses and expansions. 
Surprisingly, this turns out to be intimately related to the classical notion of contiguity. More precisely, having the same strong homotopy type is equivalent to the existence of a strong equivalence. \\

 A simplicial map $\varphi \colon K\to K'$ is called a strong equivalence if there exists $\psi \colon K' \to K$ such that $\varphi \circ \psi \sim \id_{K'} $ and $\psi \circ \varphi \sim \id_K$.   The theory of strong homotopy types of simplicial complexes was introduced in \cite{barmak-minian}.  
 Strong homotopy types can be described by elementary moves called strong collapses.  From this theory, Birman and Minian obtained new results for studying simplicial collapsibility. \\

A natural definition of Lusternik-Schnirelman (LS) category for simplicial complexes, that is invariant under strong equivalences, is given in \cite{TerneroVirgosVilches:2015}
and a notion of discrete topological complexity in the setting of simplicial complexes by means of contiguous simplicial maps is given in \cite{TVMV:2018}.  \\

Let $K$ be a simplicial complex and $L\subseteq K$ a subcomplex. We say that $L$ is \emph{categorical}, provided there exists a vertex $v_0\in K$ such that the inclusion map  $i \colon L \hookrightarrow K$ and the constant map $c_{v_0}\colon L \to K$ are in the same contiguity class. The simplicial LS category, denoted \textbf{$\scat(K)$},  is defined as the least integer $n\geq 0$ such that $K$ is covered by $(n+1)$ categorical subcomplexes \cite{TerneroVirgosVilches:2015}.  Immediately from this definition, we can 
conclude that  a simplicial complex $K$ is strongly collapsible i.e., having the strong homotopy type of a point if and only if $\scat(K)=0$.\\


To define a simplicial structure for the Cartesian product of two simplicial complexes we need to give an  order to the vertices but still it does not satisfy the universal property of a product. As in \cite{Kozlov:2008},  we can define a product of  two simplicial complexes, called categorical product,  in such a way that their product is a  simplicial complex and satisfy the universal property of a product. Let $K$ and $K'$ be two  simplicial complexes. Then  the categorical product of $K$ and $K'$, denoted by $K\prod K'$ is an  simplicial complex  such that \\
	\begin{compactenum}
		\item its vertices are pairs $(v,\omega)$ where $v$ is  a vertex of $K$ and $\omega$ is a vertex of $K'$, and \\
		
		\item the projections $\mathrm{pr}_1: K \prod K'\to K$ and  $\mathrm{pr}_2: K \prod K'\to K'$  are simplicial maps and are universal with the property.\\
		
\end{compactenum}


 Let $K$ be a simplicial complex and $K^2=K\prod K$ a categorical product. Then a simplicial subcomplex $\Omega \subset K^2$ is a Farber subcomplex, provided there exists a simplicial map $\sigma\colon \Omega \to K$ such that $\Delta \circ \sigma \sim \iota_\Omega$ where $\iota_\Omega\colon \Omega \xhookrightarrow{} K^2$ is the inclusion map and $\Delta\colon K\to K^2$ is the diagonal map $\Delta(v)=(v,v)$.  \\

 \begin{definition} \cite{TVMV:2018}
 The discrete topological complexity $\tc(K)$ of the simplicial complex $K$ is the least integer $n\geq 0$ such that $K^2$ can be covered by $(n+1)$ Farber subcomplexes.\\
\end{definition}

In other words, $\tc(K)\leq n $ if and only if $K^2=\Omega_0 \cup \ldots \cup \Omega_n$, and there exist simplicial maps $\sigma_j\colon \Omega_j \to K$ such that $\Delta \circ \sigma_j \sim \iota_j$ where $\iota_j\colon \Omega_j \xhookrightarrow{}  K^2$ are inclusions for $j=0,\ldots,n$.\\

Before the end of this section, we remark that for a given simplicial complex $K$, $\cat(|K|)$ and $\tc(|K|)$ are lower bounds for $\scat(K)$ and $\tc (K)$, respectively.

\section{Contiguity Distance}
Throughout the paper, a simplicial complex is meant to be an abstract simplicial complex, all simplicial complexes are assumed to be  (edge-) path connected, and all maps between simplicial complexes are assumed to be simplicial maps.\\
 
\begin{definition} \cite[Definition~8.1]{VM:2019} 
For simplicial maps $\varphi,\psi \colon K \to K'$, the contiguity distance between $\varphi$ and $\psi$, denoted by $\sd(\varphi,\psi)$, 
is the least integer $n\geq 0$ such that there exists a covering of $K$ by subcomplexes $K_0, K_1, \ldots, K_n$ with the property that $\varphi{\big|}_{K_j}, \psi{\big|}_{K_j} \colon K_j \to K'$ 
are in the same contiguity class for all $j=0,1,\ldots, n$.\\
\end{definition}

\begin{remark} There is another simplicial version of homotopic distance, called simplicial distance,  introduced in \cite{B} and  given in the sense of Gonzalez \cite{JG}. The underlying idea in the simplicial distance $\operatorname{SimpD}$ lies in \cite[Lemma~1.1]{JG} even though $\operatorname{SimpD}$ has nothing to do with sections of a path fibration. It is not easy to compare simplicial distance and contiguity distance as it is not easy to compare simplicial complexity \cite{JG} and discrete topological complexity \cite{TVMV:2018}. 
\end{remark}

It is easy to see that the contiguity distance defines a symmetric relation on the set of simplicial maps and the contiguity distance between two maps is zero 
if and  only if they are in the same contiguity class.  The next proposition tells us that this notion is  well-defined on the set of equivalence classes of simplicial 
maps.\\


\begin{proposition} \label{prop:contiguity}
If $\varphi  \sim  \bar{\varphi}, \psi \sim \bar{\psi} \colon K \to K'$, then $\sd(\varphi,\psi)=\sd(\bar{\varphi},\bar{\psi})$. 
\end{proposition}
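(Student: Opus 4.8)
The plan is to prove the two inequalities $\sd(\bar\varphi,\bar\psi) \le \sd(\varphi,\psi)$ and $\sd(\varphi,\psi) \le \sd(\bar\varphi,\bar\psi)$ separately; since the hypotheses are symmetric in the barred and unbarred maps, it suffices to establish one of them and then rerun the identical argument with $(\varphi,\psi)$ and $(\bar\varphi,\bar\psi)$ interchanged.

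The key preliminary observation I would record is that contiguity is inherited by subcomplexes: if $\alpha \sim_c \beta \colon K \to K'$ and $L \subseteq K$ is a subcomplex, then $\alpha|_L \sim_c \beta|_L$. This is immediate, because every simplex $\sigma$ of $L$ is already a simplex of $K$, so $\alpha(\sigma) \cup \beta(\sigma)$ is a simplex of $K'$ by the contiguity of $\alpha$ and $\beta$, which is exactly the contiguity condition for the restrictions. Chaining this through a contiguity sequence shows that $\alpha \sim \beta$ implies $\alpha|_L \sim \beta|_L$ for every subcomplex $L$. Applying this to the hypotheses $\varphi \sim \bar\varphi$ and $\psi \sim \bar\psi$ yields $\varphi|_L \sim \bar\varphi|_L$ and $\psi|_L \sim \bar\psi|_L$ for all such $L$.

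Now suppose $\sd(\varphi,\psi) = n$ and fix a witnessing cover $K = K_0 \cup \cdots \cup K_n$ by subcomplexes with $\varphi|_{K_j} \sim \psi|_{K_j}$ for each $j$. I claim the very same cover witnesses $\sd(\bar\varphi,\bar\psi) \le n$. On each $K_j$ one has the chain
\[
\bar\varphi|_{K_j} \sim \varphi|_{K_j} \sim \psi|_{K_j} \sim \bar\psi|_{K_j},
\]
where the two outer relations come from the restriction property above and the middle one from the defining property of the cover. Since lying in the same contiguity class is an equivalence relation, and in particular transitive (a chain of contiguities may simply be concatenated), we obtain $\bar\varphi|_{K_j} \sim \bar\psi|_{K_j}$ for every $j$, whence $\sd(\bar\varphi,\bar\psi) \le n = \sd(\varphi,\psi)$.

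The reverse inequality follows verbatim by exchanging the roles of the pairs, using $\bar\varphi \sim \varphi$ and $\bar\psi \sim \psi$, and combining the two inequalities gives $\sd(\varphi,\psi) = \sd(\bar\varphi,\bar\psi)$. I do not anticipate any genuine obstacle: the entire argument hinges on the restriction lemma, and the only subtlety worth stating carefully is that the covering sets $K_j$ are subcomplexes, so the purely combinatorial contiguity condition is automatically inherited by the restrictions.
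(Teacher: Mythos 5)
Your proof is correct and follows essentially the same route as the paper's: fix a witnessing cover for one pair, restrict the contiguity relations $\varphi\sim\bar\varphi$ and $\psi\sim\bar\psi$ to each covering subcomplex, chain via transitivity of the contiguity-class relation, and conclude by symmetry. The only difference is that you explicitly justify the restriction lemma (that $\alpha\sim\beta$ implies $\alpha|_L\sim\beta|_L$ for a subcomplex $L$), which the paper asserts without proof; this is a welcome addition but not a different argument.
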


\begin{proof} 
Suppose first that $\sd(\varphi,\psi)=n$.  By definition this means that there exists a covering of $K$ by subcomplexes $K_0, K_1, \ldots, K_n$ with the property that  $\varphi{\big|}_{K_j}, \psi{\big|}_{K_j} \colon K \to K'$ are in the same contiguity class for all $j$.  Since $\varphi \sim \bar{\varphi}$ and $\psi \sim\bar{\psi}$, so their restrictions to $K_j$  are also in the same contiguity classes for all $j$.  Also recall that, contiguity classes are equivalence classes, so we have  $\bar{\varphi}{\big|}_{K_j}\sim \varphi{\big|}_{K_j}\sim\psi{\big|}_{K_j}\sim\bar{\psi}{\big|}_{K_j}$ for all $j$. Therefore $\sd(\bar{\varphi},\bar{\psi})\leq n$.  Starting with $\sd(\bar{\varphi}, \bar{\psi})$  gives us $\sd(\varphi,\psi) \leq \sd(\bar{\varphi},\bar{\psi})$, which finishes the proof.\\
\end{proof}

 We can use a finite covering of a complex $K$ to produce an upper bound for the simplicial distance between maps. \\

\begin{proposition} 
Given two simplicial maps $\varphi, \psi \colon   K \to K'$ and a finite covering of $K$ by subcomplexes $K_0, K_1, \ldots, K_n$,  we have
\[
\sd(\varphi,\psi)\leq \sum_{i=0}^n \sd(\varphi{\big|}_{K_j},\psi{\big|}_{K_j}) +n.
\] 
\end{proposition}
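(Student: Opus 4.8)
The plan is to refine the given covering $\{K_j\}$ into a single covering of $K$ on each member of which $\varphi$ and $\psi$ lie in the same contiguity class, and then simply to count how many members such a refinement requires. For each $j=0,1,\ldots,n$, write $d_j=\sd(\varphi{\big|}_{K_j},\psi{\big|}_{K_j})$. By the definition of contiguity distance applied to the restricted maps $\varphi{\big|}_{K_j},\psi{\big|}_{K_j}\colon K_j\to K'$, there is a covering of $K_j$ by subcomplexes $K_j^0,\ldots,K_j^{d_j}$ such that $(\varphi{\big|}_{K_j}){\big|}_{K_j^i}$ and $(\psi{\big|}_{K_j}){\big|}_{K_j^i}$ are in the same contiguity class for every $i=0,\ldots,d_j$.

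First I would observe that each $K_j^i$, being a subcomplex of the subcomplex $K_j\subseteq K$, is itself a subcomplex of $K$, and that the iterated restriction $(\varphi{\big|}_{K_j}){\big|}_{K_j^i}$ is simply $\varphi{\big|}_{K_j^i}$ (and likewise for $\psi$). Hence on each $K_j^i$ we have $\varphi{\big|}_{K_j^i}\sim\psi{\big|}_{K_j^i}$. Moreover, since the $K_j$ cover $K$ and, for each fixed $j$, the $K_j^i$ cover $K_j$, the whole family $\{K_j^i : 0\le j\le n,\ 0\le i\le d_j\}$ is a covering of $K$ by subcomplexes. This family therefore meets the requirement in the definition of $\sd(\varphi,\psi)$.

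Consequently $\sd(\varphi,\psi)$ is at most the number of members of this family minus one, since the contiguity distance is normalised to be one less than the number of subcomplexes in a witnessing cover. The number of members is $\sum_{j=0}^n (d_j+1)=\sum_{j=0}^n d_j+(n+1)$, and subtracting one yields $\sum_{j=0}^n d_j+n$, which is exactly the asserted bound. The only point demanding care — and the only place a sign error could creep in — is this off-by-one normalisation: each $\sd$-term counts subcomplexes minus one, so amalgamating the $n+1$ optimal refinements restores $n+1$ subcomplexes in total but contributes only $n$ to the final index once the global normalisation is applied. I expect this bookkeeping, rather than any genuine difficulty, to be the main (and minor) obstacle.
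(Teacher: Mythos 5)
Your proof is correct and is essentially identical to the paper's own argument: both refine each $K_j$ by an optimal cover witnessing $\sd(\varphi{\big|}_{K_j},\psi{\big|}_{K_j})$, observe that the amalgamated family of $\sum_{j=0}^n\big(\sd(\varphi{\big|}_{K_j},\psi{\big|}_{K_j})+1\big)$ subcomplexes covers $K$ with $\varphi$ and $\psi$ in the same contiguity class on each member, and then apply the ``number of subcomplexes minus one'' normalisation. Your explicit remarks that $(\varphi{\big|}_{K_j}){\big|}_{K_j^i}=\varphi{\big|}_{K_j^i}$ and that a subcomplex of a subcomplex is a subcomplex of $K$ are details the paper leaves implicit, but they do not change the argument.
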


\begin{proof} 
Suppose $\sd(\varphi|_{K_j},\psi|_{K_j})=m_j$ for each $j=0,1,\dots, n$. So there exists a covering of $K_j$ by subcomplexes $K_j^0, K_j^1, \ldots, K_j^{m_j}$ such that $\varphi|_{K_j^i}\sim\psi|_{K_j^i}$. \\

\noindent The collection $\mathcal{K}=\big\{K_0^0,\ldots, K_0^{m_0},K_1^0,\ldots, K_1^{m_1},\ldots, K_n^0,\ldots, K_n^{m_n}\big\}$ is a covering for $K$ satisfying $\varphi|_{L}\sim\psi|_{L}$ for all $L \in \mathcal{K}$. So since the cardinality of $\mathcal{K}$ is $(m_0 + m_1 + \ldots + m_n)+n +1$, the required inequality holds. \\
\end{proof}

Next, we mention the relation between the simplicial $LS$-category and the contiguity distance between simplicial maps. First note that  for a subcomplex $L$ of a simplicial complex $K$, if $\id_K{\big|}_L$ and $c_{v_0}{\big|}_L$ are in the same contiguity class then $L$ is categorical in $K$. 
From this observation it is easy to see that for a simplicial complex $K$ and a vertex $v_0$ of $K$, we have 
\[
\scat(K)=\sd(\id_K, c_{v_0}).
\]

Let $K$ be a simplicial complex and $|K|$ denote its geometric realization. We know that both $\scat(K)$ and $\tc(K)$ might differ from $\cat (|K|)$ and $\tc(|K|)$ (see, Theorem~\ref{thm:catgeometric} and \cite[Theorem~5.2]{TVMV:2018}).  The simplicial category and discrete topological complexity depend on the simplicial structure more than on the geometric realization of the complex \cite{TVMV:2018, TerneroVirgosVilches:2015}. This implies that for  simplicial complexes $K, K'$ and simplicial maps $\varphi, \psi \colon K \to K'$, we expect $\sd(\varphi, \psi)$ is not necessarily the same as $\mathrm{D}(|\varphi|,|\psi|)$, where $|\varphi|, |\psi| \colon |K| \to |K'|$ are continuous maps between their corresponding geometric realizations \cite{VM:2019}. \\

\begin{proposition} \label{prop:distancerelation}
	For simplicial maps $\varphi, \psi: K\to L$, we have  $\mathrm{D}(|\varphi|,|\psi|) \leq \sd(\varphi, \psi)$. 
\end{proposition}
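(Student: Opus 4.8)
The plan is to reduce the geometric statement to the simplicial one by exploiting the fundamental compatibility between contiguity and homotopy under geometric realization. The key fact I would invoke is the classical result that if two simplicial maps $\varphi, \psi \colon K \to L$ are contiguous, then their geometric realizations $|\varphi|, |\psi| \colon |K| \to |L|$ are homotopic; indeed the contiguity condition guarantees that for each simplex $\sigma$ the images $|\varphi|(\sigma)$ and $|\psi|(\sigma)$ lie in a common simplex of $|L|$, so the straight-line homotopy $H(x,t) = (1-t)|\varphi|(x) + t|\psi|(x)$ is well-defined and continuous. This immediately upgrades to contiguity classes: if $\varphi \sim \psi$ via a chain $\varphi = \varphi_1 \sim_c \cdots \sim_c \varphi_m = \psi$, then concatenating the straight-line homotopies yields $|\varphi| \simeq |\psi|$.

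First I would set $\sd(\varphi,\psi) = n$ and unpack the definition: there is a covering of $K$ by subcomplexes $K_0, \ldots, K_n$ with $\varphi|_{K_j} \sim \psi|_{K_j}$ for every $j$. Passing to geometric realizations, the family $\{|K_0|, \ldots, |K_n|\}$ is a covering of $|K|$ by closed subsets, since geometric realization of a subcomplex sits inside $|K|$ as a closed subspace and realization preserves the covering relation $|K| = \bigcup_j |K_j|$. On each piece, the observation above gives $|\varphi|\big|_{|K_j|} = |\varphi|_{K_j}| \simeq |\psi|_{K_j}| = |\psi|\big|_{|K_j|}$, so the restrictions of $|\varphi|$ and $|\psi|$ to each $|K_j|$ are homotopic.

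The remaining issue is that the definition of homotopic distance $\mathrm{D}$ requires an \emph{open} cover with homotopic restrictions, whereas the realizations $|K_j|$ give a \emph{closed} cover. The main obstacle is therefore bridging the open-versus-closed gap. I would handle this by the standard technique used for $\cat$ and $\tc$ of polyhedra: since $|K|$ is a CW (in fact simplicial) complex, hence a metrizable ANR, each closed subcomplex $|K_j|$ admits an open neighborhood $U_j$ that deformation retracts onto $|K_j|$. Using the retraction $r_j \colon U_j \to |K_j|$, the homotopies $|\varphi||_{|K_j|} \simeq |\psi||_{|K_j|}$ extend to homotopies $|\varphi||_{U_j} \simeq |\psi||_{U_j}$ (precompose with $r_j$ and use that $|\varphi| \simeq |\varphi| \circ r_j$ on $U_j$ via the deformation retraction, and likewise for $\psi$). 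One can further shrink the $U_j$ so that they still cover $|K|$; this is possible because the closed cover $\{|K_j|\}$ is finite and $|K|$ is normal. The resulting open cover $\{U_0, \ldots, U_n\}$ of $|K|$ witnesses $|\varphi||_{U_j} \simeq |\psi||_{U_j}$ for all $j$, whence $\mathrm{D}(|\varphi|,|\psi|) \leq n = \sd(\varphi,\psi)$, as claimed. I expect the neighborhood-retraction step to be where the genuine point-set care is needed, though it is entirely standard for simplicial complexes.
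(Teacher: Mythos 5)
Your proposal is correct, and its core is the same as the paper's: both take the subcomplex cover $K_0,\dots,K_n$ witnessing $\sd(\varphi,\psi)=n$, pass to geometric realizations to get a closed cover $|K_0|,\dots,|K_n|$ of $|K|$, and use the classical fact that maps in the same contiguity class have homotopic realizations (straight-line homotopies, concatenated along the contiguity chain). The genuine difference is that the paper stops there: its proof simply exhibits the closed cover with homotopic restrictions and does not address the fact that $\mathrm{D}$, as defined in the paper, requires an \emph{open} cover. You identified exactly this gap and closed it with the standard polyhedral argument: each closed subpolyhedron $|K_j|$ is a neighborhood deformation retract in $|K|$ (e.g.\ via its regular neighborhood in a barycentric subdivision), and composing with the retraction $r_j$ transports the homotopy $|\varphi|\big|_{|K_j|}\simeq|\psi|\big|_{|K_j|}$ to the open neighborhood $U_j$. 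This makes your write-up strictly more complete than the paper's. One small simplification: your final shrinking step is unnecessary, since each $U_j$ contains $|K_j|$ and the $|K_j|$ already cover $|K|$, so the $U_j$ automatically form an open cover. (As an aside, the paper's own proof also misstates the covering condition, writing it in terms of inclusions and constant maps as in the $\scat$ case rather than in terms of $\varphi|_{K_j}\sim\psi|_{K_j}$; your version states the condition correctly.)
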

\begin{proof}  
	Let $\sd(\phi,\psi)=n$ so that there exist subcomplexes $K_0, K_1, \dotsc, K_n$ in such a way that the inclusion map $\iota_i\colon K_i \to K$ and the constant map  $c_v \colon K_i \to L$ are in the same contiguity class, $\iota_i \sim c_v$. Note that the union of the closed subsets  $|K_0|, |K_1|, \dotsc, |K_n|$ of $|K|$  covers $|K|$ and the geometric realizations of $\iota_i$ and $c_v$, 
	\[ |\iota_i|, |c_v| \colon |K_i| \to |K|\] 
	are homotopic continuous maps. 
\end{proof} 

The following is an example for the strict form of the inequality given in Proposition~\ref{prop:distancerelation}.

\begin{example}\label{ex:strict}
	Consider the simplicial complex $K$ given in Figure~\ref{fig:strict} \cite{barmak-minian}. Let $\id_K$ and $c$ be the identity simplicial map and constant simplicial map on $K$, respectively.  We know that $\scat K=1$ (\cite[Example~3.2]{TerneroVirgosVilches:2015}) so that $\sd(\id_{K}, c)=1$. Notice that the homotopic distance $\mathrm{D}(|\id_{K}|, |c|)$ is zero which follows from the fact that 
	the geometric realization $|K|$, of $K$ is contractible. Therefore $\mathrm{D}(|\id_{K}|, |c|) < \sd(\id_{K},c)$.
\end{example}

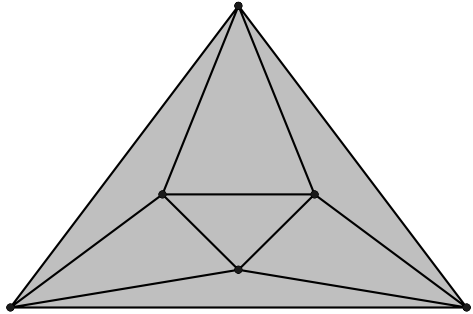
\begin{figure}[!ht]
	\centering
	\begin{pspicture}
	(-0.2,-0.1)(6.2,4.1)
	\psline*[linestyle=solid,linecolor=gray!50]%
	(0,0)(3,0.5)(6,0)(0,0)(3,4)(6,0)(4,1.5)(3,0.5)(2,1.5)(4,1.5)(3,4)(2,1.5)(0,0)
	\psline*[linestyle=solid,linecolor=gray!50]%
	(3,4)(2,1.5)(4,1.5)(3,4)
	\psline*[linestyle=solid,linecolor=gray!50]%
	(0,0)(2,1.5)(3,0.5)(0,0)
	\psline*[linestyle=solid,linecolor=gray!50]%
	(3,0.5)(4,1.5)(6,0)(3,0.5)
	\psline[linestyle=solid,linecolor=black]%
	(0,0)(3,0.5)(6,0)(0,0)(3,4)(6,0)(4,1.5)(3,0.5)(2,1.5)(4,1.5)(3,4)(2,1.5)(0,0)
	\psdots[dotstyle=o,dotsize=3pt,linewidth=3pt,linecolor=black,fillcolor=black!90]%
	(0,0)(6,0)(3,4)(3,0.5)(4,1.5)(2,1.5)
	
	\end{pspicture}
	\caption[]{$|K|$ is contractible whereas $K$ is not strongly collapsible}
	\label{fig:strict}
\end{figure}

Before we study the behaviour of contiguity distance under barycentric subdivision, we recall some ingredients and talk about how $\scat$ behaves under barycentric subdivision. \\

\begin{definition}
The barycentric subdivison of a given simplicial complex $K$ is the simplicial complex $\ssd K$ whose the set of vertices is $K$ and each $n$-simplex in $\ssd K$ is of form  $\{\sigma_0, \sigma_1,\ldots, \sigma_n\}$ where $\sigma_0 \subsetneq \sigma_1 \subsetneq \ldots  \subsetneq \sigma_n$. \\
\end{definition}

\begin{definition} For a simplicial map $\varphi \colon K \rightarrow L$, the induced map $\ssd \varphi \colon \ssd K \rightarrow \ssd L$ is given by 

\[
(\ssd \varphi)(\{\sigma_0, \ldots,\sigma_q\})=\{\varphi(\sigma_0),\ldots, \varphi(\sigma_q)\}.
\]

\end{definition}

\noindent Notice that $\ssd \varphi$ is a simplicial map and $\ssd (\id) = \id$.\\

\begin{proposition}\cite{TVMV:2019}\label{313} If the simplicial maps $\varphi, \psi \colon K \rightarrow L$ are in the same contiguity class, so are $\ssd \varphi$ and $\ssd \psi$.\\
\end{proposition}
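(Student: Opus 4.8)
The plan is to reduce to a single contiguity and then interpolate through an auxiliary ``union'' map. Since being in the same contiguity class is by definition the transitive closure of $\sim_c$, it suffices to prove the statement under the stronger hypothesis $\varphi \sim_c \psi$: once we know $\ssd\varphi \sim \ssd\psi$ whenever $\varphi \sim_c \psi$, we may chain this conclusion along a contiguity sequence $\varphi = \varphi_1 \sim_c \cdots \sim_c \varphi_m = \psi$ and invoke transitivity of $\sim$ to recover the general case. So assume $\varphi \sim_c \psi$.

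The first step is to manufacture an intermediate map that dominates both. For a simplex $\sigma$ of $K$ put $\chi_0(\sigma) = \varphi(\sigma)\cup\psi(\sigma)$; by the definition of contiguity this union is a simplex of $L$, and $\sigma\subseteq\sigma'$ forces $\chi_0(\sigma)\subseteq\chi_0(\sigma')$. Thus $\chi_0$ is an order-preserving assignment on the simplices of $K$ (ordered by inclusion) and induces a simplicial map $\chi\colon \ssd K\to\ssd L$, $\chi(\{\sigma_0,\ldots,\sigma_q\})=\{\chi_0(\sigma_0),\ldots,\chi_0(\sigma_q)\}$. By construction $\varphi(\sigma)\subseteq\chi_0(\sigma)$ and $\psi(\sigma)\subseteq\chi_0(\sigma)$ for every $\sigma$, so both $\ssd\varphi$ and $\ssd\psi$ are dominated, simplex by simplex, by $\chi$.

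The heart of the argument is a comparison lemma: if two order-preserving assignments $F,G$ on the simplices of $K$ satisfy $F(\sigma)\subseteq G(\sigma)$ for all $\sigma$, then the maps they induce $\ssd K\to\ssd L$ lie in the same contiguity class. Granting this, applying it to the pairs $(\sigma\mapsto\varphi(\sigma),\ \chi_0)$ and $(\sigma\mapsto\psi(\sigma),\ \chi_0)$ gives $\ssd\varphi\sim\chi$ and $\ssd\psi\sim\chi$, whence $\ssd\varphi\sim\ssd\psi$. To prove the lemma I would flip the value of $F$ to $G$ one simplex at a time, processing the simplices of $K$ in an order refining reverse inclusion (maximal simplices first): list them as $\tau_1,\ldots,\tau_N$ with $\tau_i\subsetneq\tau_j\Rightarrow i>j$, and let $H_k$ equal $G$ on $\tau_1,\ldots,\tau_k$ and $F$ elsewhere, so $H_0=F$ and $H_N=G$. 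This ordering keeps each $H_k$ order-preserving: in a comparison $\tau_i\subsetneq\tau_j$ the larger simplex $\tau_j$ carries the smaller index, so it is flipped no later than $\tau_i$; the only surviving mixed case has $\tau_j$ flipped and $\tau_i$ not, and then $F(\tau_i)\subseteq F(\tau_j)\subseteq G(\tau_j)$ suffices. Consecutive maps induced by $H_{k-1}$ and $H_k$ differ only on the single simplex $\tau_k$, where the value changes from $F(\tau_k)$ to $G(\tau_k)$; for any chain of $\ssd K$ containing $\tau_k$, the one extra vertex $F(\tau_k)$ slots between its neighbours because (value at predecessor) $\subseteq F(\tau_k)\subseteq G(\tau_k)\subseteq$ (value at successor), which is exactly the contiguity condition. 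Composing these single steps yields $\Delta F=H_0\sim_c H_1\sim_c\cdots\sim_c H_N=\Delta G$.

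The main obstacle is that $\ssd\varphi$ and $\ssd\psi$ are in general \emph{not} contiguous, and not even contiguous to $\chi$: a two-vertex example already produces two incomparable image simplices that break the total order required of a simplex of $\ssd L$. The genuine work is therefore the comparison lemma, and its delicate point is the \emph{order} of flipping. Processing maximal simplices first is precisely what keeps every intermediate assignment order-preserving and what guarantees, at each single flip, that the newly introduced vertex remains comparable to all others in the affected chains; reversing the order would violate monotonicity, as one checks on the same small example.
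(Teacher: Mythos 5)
Your proof is correct. There is nothing in the paper itself to compare it against: the proposition is imported from \cite{TVMV:2019} with no proof reproduced, so your argument stands as a self-contained one, and it works. Its two ingredients are exactly the classical ones from the strong homotopy type theory of Barmak and Minian \cite{Bar:2011, barmak-minian}: the auxiliary monotone assignment $\chi_0(\sigma)=\varphi(\sigma)\cup\psi(\sigma)$ (the comparable intermediate map on face posets), and your comparison lemma, which is the known fact that pointwise-comparable order-preserving maps of posets induce order-complex maps in the same contiguity class. In the literature this fact is usually phrased and proved in the language of finite topological spaces (the functors relating complexes to their face posets); your one-simplex-at-a-time flip, performed in an order refining reverse inclusion, is precisely the standard proof of that fact carried out directly at the simplicial level, and your monotonicity and contiguity verifications at each flip are correct. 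You are also right that no one-step argument can succeed, since $\ssd\varphi$, $\ssd\psi$ and $\chi$ are in general pairwise non-contiguous, so the multi-step interpolation is genuinely needed. Two minor points. First, the enumeration $\tau_1,\dotsc,\tau_N$ tacitly assumes that $K$ has finitely many simplices; this is the standing assumption in this circle of papers, but you should state it. Second, you can weaken that assumption and shorten the interpolation by flipping all simplices of a fixed dimension at once, working from the top dimension downward: a simplex of $\ssd K$ is a chain and therefore contains at most one simplex of $K$ of each dimension, so the contiguity check at each stage is identical to yours, and the passage from $\ssd\varphi$ to $\chi$ then takes at most $\dim K+1$ contiguity steps and makes sense for any finite-dimensional complex.
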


The relation between the contiguity distance of two maps and the contiguity distance of their induced maps on barycentric subdivisions can be given as follows.

\begin{theorem}\label{thm:barycentric} For simplicial maps $\varphi,\psi\colon K \rightarrow K'$, $\sd(\ssd \varphi, \ssd \psi)\leq \sd(\varphi,\psi)$. 
\end{theorem}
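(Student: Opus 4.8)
The plan is to start from a minimal categorical covering realizing $\sd(\varphi,\psi)$ and to subdivide it. Concretely, suppose $\sd(\varphi,\psi)=n$, so that there is a covering $K_0,\dots,K_n$ of $K$ by subcomplexes with $\varphi{\big|}_{K_j}\sim\psi{\big|}_{K_j}$ for every $j$. I would produce the candidate covering $\ssd K_0,\dots,\ssd K_n$ of $\ssd K$ and show it witnesses $\sd(\ssd\varphi,\ssd\psi)\le n$.

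First I would check that $\{\ssd K_j\}$ genuinely covers $\ssd K$. A simplex of $\ssd K$ is a chain $\sigma_0\subsetneq\cdots\subsetneq\sigma_q$ of simplices of $K$; its top element $\sigma_q$ lies in some $K_j$, and since $K_j$ is closed under taking faces, every $\sigma_i$ lies in $K_j$ as well, so the whole chain is a simplex of $\ssd K_j$. Thus the subdivided subcomplexes cover $\ssd K$, and there are $n+1$ of them.

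The key identity I would then establish is that restriction commutes with subdivision, namely $(\ssd\varphi){\big|}_{\ssd K_j}=\ssd(\varphi{\big|}_{K_j})$ and likewise for $\psi$: both send a chain $\{\sigma_0,\dots,\sigma_q\}$ in $K_j$ to $\{\varphi(\sigma_0),\dots,\varphi(\sigma_q)\}$ by the very definition of $\ssd\varphi$. With this identification in hand, the hypothesis $\varphi{\big|}_{K_j}\sim\psi{\big|}_{K_j}$ feeds directly into Proposition~\ref{313}, which yields $\ssd(\varphi{\big|}_{K_j})\sim\ssd(\psi{\big|}_{K_j})$, that is, $(\ssd\varphi){\big|}_{\ssd K_j}\sim(\ssd\psi){\big|}_{\ssd K_j}$, for every $j$. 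Hence $\{\ssd K_j\}_{j=0}^n$ is a covering of $\ssd K$ on each piece of which $\ssd\varphi$ and $\ssd\psi$ are in the same contiguity class, giving $\sd(\ssd\varphi,\ssd\psi)\le n=\sd(\varphi,\psi)$.

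The conceptual heart of the argument is Proposition~\ref{313} (contiguity classes are preserved by $\ssd$); once that is available, the only thing to be careful about is the compatibility between restriction-to-a-subcomplex and subdivision, so that the restricted subdivided maps are literally the subdivisions of the restricted maps. I do not expect a genuine obstacle here, since both the covering property and the commutation identity amount to bookkeeping about chains of faces. This one-sided nature is exactly what one should expect: subdividing can only refine a categorical covering, never force it to grow, so the inequality goes in only one direction.
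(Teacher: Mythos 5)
Your proposal is correct and follows essentially the same route as the paper: subdivide the optimal covering to get $\{\ssd K_j\}$, use the identity $(\ssd\varphi)\big|_{\ssd K_j}=\ssd\bigl(\varphi\big|_{K_j}\bigr)$, and invoke Proposition~\ref{313} to preserve contiguity classes. Your explicit verification that the chains of $\ssd K$ are covered by the $\ssd K_j$ (via the top simplex of a chain and closure under faces) is a detail the paper leaves implicit, but it does not change the argument.
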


\begin{proof} Let $\sd(\varphi,\psi)=n$. Then there are subcomplexes $K_0, K_1, \ldots, K_n$ covering $K$ such that $\varphi|_{K_i}\sim \psi|_{K_i}$ for all $i=0,1,\ldots, n$. \\

\noindent Take the cover $\{\ssd(K_0), \ssd(K_1), \ldots, \ssd(K_n)\}$ of $\ssd{K}$. By Proposition~\ref{313}, if $\varphi|_{K_i}\sim \psi|_{K_i}$ then $\ssd(\varphi|_{K_i})\sim \ssd(\psi|_{K_i})$.\\

\noindent On the other hand, $\ssd(\varphi|_{K_i})=\ssd\varphi|_{\ssd(K_i)}$. More precisely, if $\{\sigma_1, \ldots, \sigma_q\} \in \ssd(K_i)$,

\[
\ssd(\varphi|_{K_i})(\{\sigma_1, \ldots, \sigma_q\})=\big\{\ssd\varphi|_{K_i}(\sigma_1),\ldots, \ssd\varphi|_{K_i}(\sigma_q)\}=\{\ssd\varphi(\sigma_1), \ldots, \ssd\varphi(\sigma_q)\big\}=\ssd\varphi\big|_{\ssd(K_i)}. 
\]

\noindent Hence, since we have $\ssd(\varphi|_{K_i})\sim \ssd(\psi|_{K_i})$, it follows that $\ssd\varphi\big|_{\ssd(K_i)}\sim\ssd\psi\big|_{\ssd(K_i)}$ for all $i$.

\end{proof}

Although the below corollary is given as a consequence of some theorems related to finite spaces in \cite[Corollary 6.7]{TerneroVirgosVilches:2015} and a direct proof is given in \cite[Theorem 3.1.1]{TVMV:2019}, we give the following alternative proof using the contiguity distance for the consistency of the paper  \\

\begin{corollary} 
	For a simplicial complex $K$, $\scat(\ssd K)\leq \scat(K)$.
\end{corollary}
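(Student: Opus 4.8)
The plan is to derive this as an immediate corollary of Theorem~\ref{thm:barycentric} by recalling the identification $\scat(K)=\sd(\id_K,c_{v_0})$ established earlier in the paper. The key observation is that barycentric subdivision interacts with the distinguished maps $\id_K$ and $c_{v_0}$ in exactly the way one would hope: we noted right after the definition of $\ssd$ that $\ssd(\id_K)=\id_{\ssd K}$, and it is equally routine that the subdivision of a constant map is again constant, since $(\ssd c_{v_0})(\{\sigma_0,\dotsc,\sigma_q\})=\{c_{v_0}(\sigma_0),\dotsc,c_{v_0}(\sigma_q)\}=\{\{v_0\}\}$, which is the constant simplicial map on $\ssd K$ sending every vertex to the vertex $\{v_0\}$ of $\ssd K$.

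With these two identifications in hand, the argument is a one-line substitution. First I would fix a vertex $v_0\in K$ so that $\scat(K)=\sd(\id_K,c_{v_0})$. Next I would apply Theorem~\ref{thm:barycentric} to the pair $\varphi=\id_K$ and $\psi=c_{v_0}$, which yields $\sd(\ssd(\id_K),\ssd(c_{v_0}))\leq \sd(\id_K,c_{v_0})$. Then I would rewrite the left-hand side using $\ssd(\id_K)=\id_{\ssd K}$ and $\ssd(c_{v_0})=c_{\{v_0\}}$, so that $\sd(\id_{\ssd K},c_{\{v_0\}})=\scat(\ssd K)$ by the same characterization applied to the complex $\ssd K$ with distinguished vertex $\{v_0\}$. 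Chaining these equalities and the inequality gives $\scat(\ssd K)\leq\scat(K)$.

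There is essentially no obstacle here, which is appropriate since the point of the corollary (as the surrounding text signals) is to showcase how cleanly the distance formalism recovers a known result rather than to prove something new. The only minor care needed is in the bookkeeping of the constant map under subdivision: one must check that $\ssd(c_{v_0})$ is genuinely the constant map at the vertex $\{v_0\}$ of $\ssd K$ and not merely contiguous to one, but this is immediate from the defining formula for $\ssd\varphi$, since every chain of faces is collapsed to the single vertex $\{v_0\}$. I would state this verification in a single sentence and then close with the substitution chain above.
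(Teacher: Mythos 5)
Your proposal is correct and follows exactly the paper's own argument: apply Theorem~\ref{thm:barycentric} with $\varphi=\id_K$ and $\psi=c_{v_0}$, observe that $\ssd(\id_K)=\id_{\ssd K}$ and that $\ssd(c_{v_0})$ is the constant map at the vertex $\{v_0\}$ of $\ssd K$, and conclude via the identity $\scat=\sd(\id,c)$. Your explicit verification that $\ssd(c_{v_0})$ is genuinely constant (not merely contiguous to a constant map) is a small bookkeeping detail the paper states without computation, but the route is the same.
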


\begin{proof} In Theorem~\ref{thm:barycentric}, take $\varphi=\id$ and $\psi=c$ as the identity map and a constant map, respectively. So the induced maps $\ssd(\id)$ and $\ssd(c)$ are also the identity and constant map on $\ssd(K)$. Thus the corollary follows.

\end{proof}


	


Observe that, for a simplicial complex $K$ being strongly collapsible is equivalent to saying that $\scat(K)=\sd(\id_K, c_{v_0})=0$.  Hence, for a strongly collapsible complex $K$, we have  $\id_K\sim c_{v_0}$.  The following theorem tells that the same is true for arbitrary maps.\\

\begin{theorem}\label{thm:collapsible}
For any maps $\varphi, \psi \colon K \to K'$, $\sd(\varphi,\psi)=0$, provided   $K$ or $K'$ is strongly collapsible. 
\end{theorem}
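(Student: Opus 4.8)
The plan is to reduce the statement to the assertion that $\varphi$ and $\psi$ lie in the same contiguity class. Indeed, as recorded just before Proposition~\ref{prop:contiguity}, the contiguity distance between two maps vanishes if and only if they are in the same contiguity class; so $\sd(\varphi,\psi)=0$ is exactly the statement $\varphi\sim\psi$. The engine driving both cases will be the elementary fact that contiguity is respected by composition: if $\alpha\sim_c\beta\colon K\to K'$ and $\gamma,\mu$ are simplicial maps for which the composites make sense, then $\gamma\alpha\sim_c\gamma\beta$ and $\alpha\mu\sim_c\beta\mu$. This follows directly from the definition of contiguity, since for a simplex $\sigma$ one has $\gamma\alpha(\sigma)\cup\gamma\beta(\sigma)=\gamma\bigl(\alpha(\sigma)\cup\beta(\sigma)\bigr)$ and $\alpha\mu(\sigma)\cup\beta\mu(\sigma)=\alpha(\mu(\sigma))\cup\beta(\mu(\sigma))$, each of which is a simplex because simplicial maps carry simplices to simplices; it then passes to contiguity classes by concatenating the defining sequences.

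Next I would split into the two hypotheses. Suppose first that $K$ is strongly collapsible. By the identity $\scat(K)=\sd(\id_K,c_{v_0})$ established above, strong collapsibility means $\id_K\sim c_{v_0}$ for some vertex $v_0\in K$. Post-composing this contiguity with $\varphi$ yields $\varphi=\varphi\circ\id_K\sim\varphi\circ c_{v_0}=c_{\varphi(v_0)}$, and likewise $\psi\sim c_{\psi(v_0)}$. It remains to connect the two constant maps $c_{\varphi(v_0)}$ and $c_{\psi(v_0)}$, and here I would invoke the standing assumption that $K'$ is edge-path connected: choosing an edge path $\varphi(v_0)=w_0,w_1,\dots,w_k=\psi(v_0)$, each pair $\{w_i,w_{i+1}\}$ is an edge of $K'$, so $c_{w_i}\sim_c c_{w_{i+1}}$, whence $c_{\varphi(v_0)}\sim c_{\psi(v_0)}$. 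Chaining these contiguities gives $\varphi\sim\psi$, so $\sd(\varphi,\psi)=0$.

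For the second hypothesis, suppose instead that $K'$ is strongly collapsible, so $\id_{K'}\sim c_{w_0}$ for some vertex $w_0\in K'$. This time I would pre-compose with $\varphi$, obtaining $\varphi=\id_{K'}\circ\varphi\sim c_{w_0}\circ\varphi=c_{w_0}$, where the final equality holds because a constant map precomposed with anything remains the same constant. Identically $\psi\sim c_{w_0}$, and therefore $\varphi\sim c_{w_0}\sim\psi$, again giving $\sd(\varphi,\psi)=0$. Observe that this case does not even require the connectedness hypothesis, since both maps collapse onto the \emph{same} constant map.

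I expect the only genuine subtlety, rather than an obstacle, to be bookkeeping the direction of composition correctly in each case: post-composition when the source $K$ collapses, pre-composition when the target $K'$ collapses. The one point that deserves care is that in the first case the two resulting constant maps $c_{\varphi(v_0)}$ and $c_{\psi(v_0)}$ genuinely need the edge-path-connectedness of $K'$ to be merged into a single contiguity class, whereas in the second case they coincide outright and no such hypothesis is used.
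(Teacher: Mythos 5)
Your proposal is correct and follows essentially the same route as the paper's proof: when $K$ is strongly collapsible you post-compose the contiguity $\id_K \sim c_{v_0}$ with $\varphi$ and $\psi$ and merge the resulting constant maps via edge-path connectedness of $K'$, and when $K'$ is strongly collapsible you pre-compose $\id_{K'} \sim c_{\omega_0}$ with each map, exactly as in the paper. Your two refinements---stating explicitly that contiguity classes are preserved under pre- and post-composition (which the paper uses implicitly), and observing that the second case needs no connectedness hypothesis since both maps become the \emph{same} constant $c_{\omega_0}$ (the paper invokes connectedness there redundantly)---are accurate but do not change the underlying argument.
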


\begin{proof}
Suppose $K$ is strongly collapsible, then we have    $\id_K \sim c_{v_0}$ where $v_0$ is a vertex in $K$. We have the following diagram 
\begin{equation*}
\xymatrix{ K \ar@/^/[r]^{id_K} \ar@/_/@{->}[r]_{c_{v_0}}   & K  \ar[r]^{\varphi}   & K'}
\end{equation*}
which implies that
\[
\varphi \circ  id_K \sim  \varphi \circ c_{v_0}  \quad \mbox{(constant)}.
\]

Similarly we have 
\begin{equation*}
\xymatrix{K \ar@/^/[r]^{id_K}\ar@/_/@{->}[r]_{c_{v_0}}   & K  \ar[r]^{\psi}   & K'}
\end{equation*}
so that 
\[
\psi \circ  id_K \sim  \psi \circ c_{v_0} \quad \mbox{(constant)}.
\]

Since $K'$ is edge-path connected,  all the constant maps are in the same contiguity class. Hence we have $\varphi= \varphi \circ  id_K \sim   \psi \circ  id_K = \psi$. \\
	
On the other hand, if $K'$ is strongly collapsible 
\[ \mbox{scat}(K')= 0 = SD(id_{K'},c_{\omega_0})\]
where $\omega_0$ is a vertex in $K'$. That is, 

\[ 
id_{K'} \sim c_{\omega_0}.
\]

This time we have the following diagram 
\begin{equation*}
\xymatrix{K   \ar[r]^{\varphi}  & K'   \ar@/^/[r]^{id_{K'}} \ar@/_/@{->}[r]_{c_{\omega_0}}  & K'}
\end{equation*}
so that
\[  id_{K'} \circ \varphi \sim c_{\omega_0} \circ \varphi \quad  \mbox{(constant)}. \]
	
Similarly we have 
\begin{equation*}
\xymatrix{K   \ar[r]^{\psi}  & K'   \ar@/^/[r]^{id_{K'}} \ar@/_/@{->}[r]_{c_{\omega_0}}  & K'}
\end{equation*}
so that
\[  
id_{K'} \circ \psi \sim c_{\omega_0} \circ \psi \quad  \mbox{(constant)}. 
\]

Note that $K'$ is edge-path connected since it is strongly collapsible. Hence we have $\varphi= id_{K'} \circ \varphi \sim id_{K'} \circ \psi = \psi$. \\
\end{proof}

For the converse we have the following result.

\begin{corollary}
Let $K$ be a simplicial complex. If $\sd(\varphi,\psi)=0$ for any pair of simplicial maps $\varphi, \psi\colon K\to K$, then $K$ is strongly collapsible. 
\end{corollary}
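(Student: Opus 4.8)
The plan is to specialize the universal hypothesis to a single well-chosen pair of maps and then invoke the identity relating $\scat$ to contiguity distance that was established earlier in the paper. Recall that for a simplicial complex $K$ and any vertex $v_0$ of $K$ we have
\[
\scat(K)=\sd(\id_K, c_{v_0}),
\]
and that $K$ is strongly collapsible precisely when $\scat(K)=0$. So the entire task reduces to exhibiting one concrete pair of simplicial self-maps whose contiguity distance computes $\scat(K)$.

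First I would observe that both the identity map $\id_K\colon K\to K$ and the constant map $c_{v_0}\colon K\to K$ are simplicial maps from $K$ to itself, so they form an admissible instance of the pair $(\varphi,\psi)$ appearing in the hypothesis. Since the hypothesis asserts that $\sd(\varphi,\psi)=0$ for \emph{every} pair of simplicial maps $K\to K$, I would apply it to this particular pair to conclude $\sd(\id_K, c_{v_0})=0$.

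Finally I would chain the two facts together: using $\scat(K)=\sd(\id_K, c_{v_0})=0$, the characterization of strong collapsibility in terms of $\scat$ gives that $K$ is strongly collapsible, as desired. This establishes the converse of Theorem~\ref{thm:collapsible} in the self-map case.

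The only subtlety worth flagging is that the argument does not require checking the condition for all pairs at once; the universal quantifier in the hypothesis is far stronger than what is needed, and the proof merely extracts the single instance that is tautologically equivalent to strong collapsibility. Consequently there is no genuine obstacle here — the content is entirely in recognizing that $(\id_K, c_{v_0})$ is the pair that turns the hypothesis into the statement $\scat(K)=0$.
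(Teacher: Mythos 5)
Your proof is correct and is essentially identical to the paper's own argument: both specialize the hypothesis to the pair $(\id_K, c_{v_0})$, use $\scat(K)=\sd(\id_K,c_{v_0})=0$, and conclude strong collapsibility from the characterization $\scat(K)=0$. Nothing further is needed.
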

\begin{proof}
If we take  $\varphi=id_K$ and $\psi=c_{v_0}$ on a fixed vertex $v_0\in K$,  our assumption $SD(id_K,c_{v_0})=0$ implies that $\mathrm{scat}(K)=0$ which is equivalent to saying that $K$ is strongly collapsible. 
\end{proof}


\begin{theorem}\label{thm:inclusion}
Let $v_0$ be a vertex of the simplicial complex $K$.  For the simplicial maps 
\[
i_1, i_2\colon K \to K^2
\]
\noindent
defined as $i_1(\sigma)=(\sigma,v_0)$ and $i_2(\sigma)=(v_0, \sigma)$, we have
$\scat(K)=\sd(i_1, i_2)$.
\end{theorem}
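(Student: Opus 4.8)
The plan is to reduce the statement to the identity $\scat(K) = \sd(\id_K, c_{v_0})$ established earlier in this section, by travelling back and forth through the universal property of the categorical product $K^2 = K \prod K$. Under that property a simplicial map into $K^2$ is the same datum as its pair of components $\mathrm{pr}_1 \circ (-)$ and $\mathrm{pr}_2 \circ (-)$; here $i_1$ has components $(\id_K, c_{v_0})$ and $i_2$ has components $(c_{v_0}, \id_K)$. In particular $\mathrm{pr}_1 \circ i_1 = \id_K$ and $\mathrm{pr}_1 \circ i_2 = c_{v_0}$. I will prove the two inequalities $\scat(K) \le \sd(i_1, i_2)$ and $\sd(i_1, i_2) \le \scat(K)$ separately.

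For $\scat(K) \le \sd(i_1, i_2)$, suppose $\sd(i_1, i_2) = n$, witnessed by a cover $K_0, \ldots, K_n$ of $K$ with $i_1{\big|}_{K_j} \sim i_2{\big|}_{K_j}$ for every $j$. The first step is the routine observation that a contiguity class is preserved under post-composition with a fixed simplicial map: if $\alpha \sim_c \beta$ then, for any simplex $\tau$, the vertex set $(\mathrm{pr}_1 \alpha)(\tau) \cup (\mathrm{pr}_1 \beta)(\tau) = \mathrm{pr}_1(\alpha(\tau) \cup \beta(\tau))$ is a simplex because $\mathrm{pr}_1$ is simplicial, and this extends along contiguity chains. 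Applying $\mathrm{pr}_1$ to each relation $i_1{\big|}_{K_j} \sim i_2{\big|}_{K_j}$ thus yields $\id_K{\big|}_{K_j} \sim c_{v_0}{\big|}_{K_j}$, so every $K_j$ is categorical and $\scat(K) \le n$.

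For the reverse inequality the key technical ingredient is a product lemma: if $f_1 \sim g_1$ and $f_2 \sim g_2$ as maps $L \to K$, then the induced maps $(f_1, f_2)$ and $(g_1, g_2)$ from $L$ to $K^2$ lie in the same contiguity class. The single-step case follows from the combinatorial description of simplices in the categorical product, namely that a vertex set of $K^2$ is a simplex exactly when both coordinate projections are simplices, so that $(f_1, f_2)(\tau) \cup (g_1, g_2)(\tau)$ is a simplex whenever $f_1(\tau) \cup g_1(\tau)$ and $f_2(\tau) \cup g_2(\tau)$ both are. One then upgrades to full contiguity classes by moving a single coordinate at a time, keeping the other coordinate fixed and invoking reflexivity of $\sim_c$ in that slot. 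Granting this, start from a categorical cover $K_0, \ldots, K_n$ realizing $\scat(K) = n$, so $\id_K{\big|}_{K_j} \sim c_{v_0}{\big|}_{K_j}$ (and symmetrically $c_{v_0}{\big|}_{K_j} \sim \id_K{\big|}_{K_j}$) for each $j$; the product lemma applied with $(f_1, f_2) = (\id_K, c_{v_0}){\big|}_{K_j}$ and $(g_1, g_2) = (c_{v_0}, \id_K){\big|}_{K_j}$ gives $i_1{\big|}_{K_j} \sim i_2{\big|}_{K_j}$, whence $\sd(i_1, i_2) \le n$.

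The main obstacle is precisely the product lemma: one must be careful that contiguity is a relation built from finite chains rather than a genuine equivalence at the single-step level, so the coordinatewise argument has to move the two factors successively (first $f_1$ to $g_1$ with the second slot held fixed, then $f_2$ to $g_2$ with the first slot held fixed) and rely on the characterization of simplices in $K \prod K$ at each intermediate step. Everything else is a direct application of the composition behaviour of contiguity together with the identity $\scat(K) = \sd(\id_K, c_{v_0})$.
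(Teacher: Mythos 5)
Your proposal is correct and follows essentially the same route as the paper: one inequality is obtained by post-composing the relation $i_1{\big|}_{L} \sim i_2{\big|}_{L}$ with the projection $p_1$, and the other by a coordinatewise product argument, which the paper phrases as $\iota \prod c_{v_0} \sim c_{v_0} \prod c_{v_0} \sim c_{v_0} \prod \iota$ composed with the diagonal $\Delta_L$ --- exactly your pairing map $(f_1,f_2)$ moved one slot at a time. Your write-up is in fact slightly more careful, since you justify the single-step case of the product lemma via the characterization of simplices in the categorical product, which the paper asserts without proof.
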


 \begin{proof}
First we prove that $\sd(i_1,i_2)\leq \scat(K)$. Let $L\subseteq K$ be categorical. That is, there exists a vertex $v_0$ of $K$ such that the inclusion map $\iota\colon L \xhookrightarrow{}  K$ and the constant map $c_{v_0}\colon L \to K$ are in the same contiguity class. We want to show that $i_1{\big|}_L$ and $i_2{\big|}_L$ are also in the same contiguity class. Consider the following composition of simplicial maps 
\begin{equation*}
\xymatrix{L \ar[r]^{\Delta_L}  & L^2 \ar@/^/[r]^{\iota \prod c_{v_0}} \ar@/_/@{->}[r]_{c_{v_0} \prod \iota}  & K^2}
 \end{equation*}
 where $\Delta_L$ is the diagonal map of $L$, defined on the set of vertices by $v \to (v,v)$,  and $\iota \prod c_{v_0}$ and $c_{v_0} \prod \iota$ is the categorical product of $\iota$ and $c_{v_0}$. Then
\begin{displaymath}
i_1{\big|}_L= (\iota \prod c_{v_0})\circ \Delta_L,
\end{displaymath}
and 
\begin{displaymath}
i_2{\big|}_L= (c_{v_0} \prod \iota)\circ \Delta_L.
\end{displaymath}
  
Since $L$ is categorical,  then $\iota \sim c_{v_0}$.  We have 

\begin{align*}
\iota \prod c_{v_0} & \sim c_{v_0}\prod c_{v_0},\\
c_{v_0} \prod \iota &\sim c_{v_0}\prod c_{v_0}.
\end{align*}

\noindent This implies 
\[
\iota \prod c_{v_0}  \sim c_{v_0} \prod \iota 
\]
so that $(\iota \prod c_{v_0})\circ \Delta_L \sim (c_{v_0} \prod \iota)\circ \Delta_L$,  which proves our claim. \\

\noindent Next, we show that $\scat(K) \leq \sd(i_1,i_2) $. Assume that $L$ is a subcomplex of $K$ with $i_1{\big|}_L \sim i_2{\big|}_L$.  Let $p_i\colon K^2 \to K$ be the projection maps for $i=1,2$. Then $p_1\circ i_1{\big|}_L \sim p_1\circ i_2{\big|}_L$ so that $\iota\sim c_{v_0}$.\\
\end{proof}

The Proposition~\ref{prop:distancerelation}  leads to the following theorem.

\begin{theorem}\label{thm:catgeometric}
	Let $K$ be a simplicial complex and $|K|$  its geometric realization. $\cat(|K|) \leq \scat(K)$.
\end{theorem}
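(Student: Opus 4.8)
The plan is to reduce the statement to Proposition~\ref{prop:distancerelation} together with the fact that both $\scat$ and $\cat$ are themselves instances of a distance between the identity and a constant map. First I would recall the observation made just before Proposition~\ref{prop:distancerelation}, namely that for a vertex $v_0 \in K$ one has $\scat(K) = \sd(\id_K, c_{v_0})$. On the topological side I would invoke the analogous characterization of Lusternik--Schnirelmann category in terms of homotopic distance from \cite{VM:2019}: for any point $x_0 \in X$,
\[
\cat(X) = \dis(\id_X, c_{x_0}).
\]
This is immediate from the definitions, since the condition $\id_X|_{U_i} \simeq c_{x_0}|_{U_i}$ says exactly that each inclusion $U_i \hookrightarrow X$ is nullhomotopic, which is precisely the covering condition defining $\cat$.

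Next I would apply Proposition~\ref{prop:distancerelation} to the pair $\varphi = \id_K$ and $\psi = c_{v_0}$. Under geometric realization one has $|\id_K| = \id_{|K|}$, while $|c_{v_0}|$ is the constant map at the point $|v_0| \in |K|$. Proposition~\ref{prop:distancerelation} then yields
\[
\dis(\id_{|K|}, c_{|v_0|}) = \dis(|\id_K|, |c_{v_0}|) \leq \sd(\id_K, c_{v_0}).
\]
Chaining the two identifications with this inequality gives
\[
\cat(|K|) = \dis(\id_{|K|}, c_{|v_0|}) \leq \sd(\id_K, c_{v_0}) = \scat(K),
\]
which is the claim.

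The only conceptual step requiring care is the characterization $\cat(X) = \dis(\id_X, c_{x_0})$, and I would cite \cite{VM:2019} for it; everything else is a direct substitution into Proposition~\ref{prop:distancerelation}, so I do not expect a serious obstacle. I would also remark that the choice of vertex $v_0$ is immaterial: since $|K|$ is path connected, all constant maps are homotopic, so the value $\dis(\id_{|K|}, c_{|v_0|})$ does not depend on the chosen base point, consistently with the fact that $\scat(K) = \sd(\id_K, c_{v_0})$ is likewise independent of $v_0$.
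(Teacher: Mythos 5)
Your proof is correct, but it follows a genuinely different route from the paper's. You work with the pair $(\id_K, c_{v_0})$, using the identity $\scat(K)=\sd(\id_K,c_{v_0})$ stated earlier in the paper together with the characterization $\cat(X)=\dis(\id_X,c_{x_0})$ from \cite{VM:2019} (valid here since $|K|$ is path connected), and then apply Proposition~\ref{prop:distancerelation} once; this avoids the categorical product entirely. The paper instead uses the pair $(i_1,i_2)\colon K\to K^2$ from Theorem~\ref{thm:inclusion}, so that $\scat(K)=\sd(i_1,i_2)$, and on the topological side uses $\cat(|K|)=\dis(\iota_1,\iota_2)$ for the two inclusions $|K|\to |K|\times|K|$. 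That route has an extra complication your argument sidesteps: the geometric realization $|K^2|$ of the categorical product is not literally $|K|\times|K|$, so the paper must invoke the homotopy equivalence $u\colon |K^2|\to |K|\times|K|$ (Lemma~5.1 of \cite{TVMV:2018}) and the homotopy invariance of homotopic distance (\cite[Proposition~3.1]{VM:2019}) to transfer the inequality. Your argument is shorter and needs fewer external inputs; the paper's version has the merit of exercising Theorem~\ref{thm:inclusion}, which it had just proved, and of running parallel to the treatment of $\tc$ via projections and products. Your closing remark about independence of the base vertex is a nice sanity check but is not needed for the inequality itself.
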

\begin{proof}  
	Consider the simplicial maps $i_1: K \to K^2$ and $i_2: K\to K^2$ defined in Theorem~\ref{thm:inclusion} so that  $\scat(K)=\sd(i_1, i_2)$. In that case, their geometric realizations 
	\[|i_1|, |i_2|\colon |K|\to |K^2|\] 
	are continuous maps. By Lemma~5.1 in \cite{TVMV:2018}, we know that $|K^2|$ and $|K|\times |K|$ are homotopy equivalent spaces. Let $u: |K^2|\to |K|\times |K|$ be the homotopy equivalence. Therefore the inclusion maps $\iota_1: |K|\to |K|\times |K|$ and $\iota_2: |K|\to |K|\times |K|$ are homotopic to $u\circ |i_1|$ and $u\circ |i_2|$ respectively. By Proposition~\ref{prop:distancerelation} and   \cite[Proposition~3.1]{VM:2019}, we have
	\begin{align*}
		\cat(|K|) = \mathrm{D}(\iota_1,\iota_2) = \mathrm{D}(u\circ |i_1|, u\circ |i_2|) \leq \mathrm{D}(|i_1|,|i_2|) \leq \sd(i_1, i_2) = \scat(K).
	\end{align*}
\end{proof}

Our next aim is to prove Theorem~\ref{invariant}. So we need Corollary~\ref{cor2} and Corollary~\ref{cor:homotopy} which follow from Proposition~\ref{prop4} and Proposition~\ref{prop:homotopy}, respectively.\\

\begin{proposition}\label{prop4}
Let $\varphi, \psi \colon K \to K'$ and $\mu \colon M \to K$ be simplicial maps. Then we have
\[
\sd(\varphi\circ \mu, \psi\circ \mu)\leq \sd(\varphi,\psi).
\]
\end{proposition}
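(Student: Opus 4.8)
The plan is to pull back a covering of $K$ that witnesses $\sd(\varphi,\psi)$ along the map $\mu$, and to check that preimages of categorical-type subcomplexes behave well under precomposition. Write $\sd(\varphi,\psi)=n$, so there is a covering of $K$ by subcomplexes $K_0,K_1,\ldots,K_n$ with $\varphi|_{K_j}\sim\psi|_{K_j}$ for each $j$. The natural candidate for a covering of $M$ is the collection of preimages $M_j=\mu^{-1}(K_j)$, and I would first verify that each $M_j$ is genuinely a subcomplex of $M$ (the preimage of a subcomplex under a simplicial map is a subcomplex, since if $\mu$ sends a simplex $\tau$ into $K_j$ then it sends every face of $\tau$ into $K_j$ as well) and that these preimages cover $M$ (because the $K_j$ cover $K$ and $\mu$ maps $M$ into $K$).

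The key step is to show that $(\varphi\circ\mu)|_{M_j}\sim(\psi\circ\mu)|_{M_j}$ for each $j$. The point is that $\mu$ restricts to a simplicial map $\mu|_{M_j}\colon M_j\to K_j$, since by construction $\mu(M_j)\subseteq K_j$. Then
\begin{align*}
(\varphi\circ\mu)|_{M_j} &= (\varphi|_{K_j})\circ(\mu|_{M_j}),\\
(\psi\circ\mu)|_{M_j} &= (\psi|_{K_j})\circ(\mu|_{M_j}).
\end{align*}
Because $\varphi|_{K_j}\sim\psi|_{K_j}$, precomposing both sides with the fixed simplicial map $\mu|_{M_j}$ preserves the contiguity relation. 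Here I would invoke the standard fact that the contiguity class relation is preserved under composition: if $\alpha\sim_c\beta$ then $\alpha\circ\gamma\sim_c\beta\circ\gamma$, since for a simplex $\rho$ the set $\alpha(\gamma(\rho))\cup\beta(\gamma(\rho))=\alpha(\gamma(\rho)\cup\gamma(\rho))\cup\beta(\gamma(\rho))$ is a simplex whenever $\gamma(\rho)$ is a simplex and $\alpha,\beta$ are contiguous on it; iterating along the chain witnessing $\varphi|_{K_j}\sim\psi|_{K_j}$ gives $(\varphi|_{K_j})\circ(\mu|_{M_j})\sim(\psi|_{K_j})\circ(\mu|_{M_j})$.

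Once these two facts are in place, the collection $M_0,M_1,\ldots,M_n$ is a covering of $M$ by $n+1$ subcomplexes on each of which $\varphi\circ\mu$ and $\psi\circ\mu$ are in the same contiguity class, so by definition $\sd(\varphi\circ\mu,\psi\circ\mu)\leq n=\sd(\varphi,\psi)$, as required.

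The main obstacle I anticipate is the composition-stability of contiguity under precomposition, specifically making sure the chain of contiguous maps witnessing $\varphi|_{K_j}\sim\psi|_{K_j}$ survives restriction and precomposition simultaneously. One must be careful that the intermediate maps $\varphi_i$ in the contiguity sequence also have the correct domain $K_j$ (not all of $K$) before precomposing with $\mu|_{M_j}$; this is handled by restricting the entire witnessing sequence to $K_j$ first, which is legitimate since contiguity is preserved under restriction to a subcomplex. With that care taken, every other verification — subcomplex structure of the $M_j$, covering, and the factorization of the restricted composites — is routine.
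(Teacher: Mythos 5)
Your proof is correct and follows essentially the same route as the paper's: pull back the witnessing cover along $\mu$ to get $M_j=\mu^{-1}(K_j)$, factor $(\varphi\circ\mu)|_{M_j}=(\varphi|_{K_j})\circ(\mu|_{M_j})$, and use the fact that contiguity classes are preserved under precomposition with a simplicial map. Your version simply makes explicit the routine checks (preimages are subcomplexes, the chain of contiguous maps survives precomposition) that the paper leaves implicit; note that the final worry is vacuous, since the intermediate maps witnessing $\varphi|_{K_j}\sim\psi|_{K_j}$ already have domain $K_j$ by definition.
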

\begin{proof} Let $\sd(\varphi,\psi)=n$. Then there exist subcomplexes $K_0, \ldots, K_n$ of $K$ such that $\varphi{\big|}_{K_j}\sim \psi{\big|}_{K_j}$ for all $j$. Define $M\supset M_j\colon=\mu^{-1}(K_j)$ and the restriction map $\mu_j\colon M_j \to K$. Then
\[
(\varphi\circ \mu)_j = \varphi \circ \mu_j = \varphi \circ \iota_j\circ \bar{\mu}_j = \varphi_j \circ \bar{\mu}_j \sim \psi_j \circ \bar{\mu}_j =  \psi \circ \iota_j\circ \bar{\mu}_j = \psi \circ \mu_j = (\psi\circ \mu)_j 
\]
where $\iota_j\colon K_j \xhookrightarrow{}  K$ is the inclusion and $\bar{\mu}_j: M_j \to K_j$, $\bar{\mu}_j(x)=\mu_j(x)$ is a map satisfying $\mu_j=\iota_j\circ \bar{\mu_j}$. Therefore $\sd(\varphi\circ\mu,\psi\circ\mu)\leq n$.\\
\end{proof}

\begin{corollary}\label{cor2}
Let $\varphi, \psi\colon K \to  K'$ be simplicial maps and $\beta\colon M \to K$ be a simplicial map which has a right strong equivalence (that is, $\beta$ satisfies $\beta \circ \alpha \sim \id_{K}$ where $\alpha\colon K \to M$). Then $\sd(\varphi\circ \beta,\psi\circ\beta)=\sd(\varphi, \psi)$.\\
\end{corollary}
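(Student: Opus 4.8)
The plan is to prove the equality by establishing the two inequalities $\sd(\varphi\circ\beta,\psi\circ\beta)\leq\sd(\varphi,\psi)$ and $\sd(\varphi,\psi)\leq\sd(\varphi\circ\beta,\psi\circ\beta)$ separately. The first is immediate, while the second exploits the right strong equivalence $\alpha$ together with Proposition~\ref{prop4} and Proposition~\ref{prop:contiguity}.

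For the first inequality I would simply invoke Proposition~\ref{prop4} with $\mu=\beta\colon M\to K$, which yields $\sd(\varphi\circ\beta,\psi\circ\beta)\leq\sd(\varphi,\psi)$ at once.

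For the reverse inequality I would apply Proposition~\ref{prop4} a second time, now to the maps $\varphi\circ\beta,\psi\circ\beta\colon M\to K'$ precomposed with $\alpha\colon K\to M$, obtaining
\[
\sd\big((\varphi\circ\beta)\circ\alpha,(\psi\circ\beta)\circ\alpha\big)\leq\sd(\varphi\circ\beta,\psi\circ\beta).
\]
Using associativity I would rewrite the left-hand side as $\sd\big(\varphi\circ(\beta\circ\alpha),\psi\circ(\beta\circ\alpha)\big)$ and then invoke the hypothesis $\beta\circ\alpha\sim\id_K$. Since left-composition with a fixed simplicial map preserves contiguity classes, this gives $\varphi\circ(\beta\circ\alpha)\sim\varphi$ and $\psi\circ(\beta\circ\alpha)\sim\psi$, so by Proposition~\ref{prop:contiguity} the contiguity distance is unchanged, i.e. $\sd\big(\varphi\circ(\beta\circ\alpha),\psi\circ(\beta\circ\alpha)\big)=\sd(\varphi,\psi)$. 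Chaining the two relations produces $\sd(\varphi,\psi)\leq\sd(\varphi\circ\beta,\psi\circ\beta)$, and combining with the first inequality closes the argument.

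The only point demanding care—and the main, if modest, obstacle—is the claim that left-composition respects contiguity classes, namely that $f\sim g$ implies $\varphi\circ f\sim\varphi\circ g$. This holds because if $f\sim_c g$ then for each simplex $\sigma$ the vertex set $f(\sigma)\cup g(\sigma)$ is a simplex of the target, hence so is its image $\varphi\big(f(\sigma)\cup g(\sigma)\big)=\varphi f(\sigma)\cup\varphi g(\sigma)$, giving $\varphi\circ f\sim_c\varphi\circ g$; iterating along a contiguity sequence settles the general case. This fact is already used implicitly in the proofs of Theorem~\ref{thm:collapsible} and Theorem~\ref{thm:inclusion}, so I would record it as a one-line observation (or a small lemma) to keep the argument self-contained.
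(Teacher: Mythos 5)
Your proof is correct and follows essentially the same route as the paper: both establish the chain $\sd(\varphi,\psi)=\sd(\varphi\circ\beta\circ\alpha,\psi\circ\beta\circ\alpha)\leq\sd(\varphi\circ\beta,\psi\circ\beta)\leq\sd(\varphi,\psi)$, using Proposition~\ref{prop4} twice and Proposition~\ref{prop:contiguity} once. Your explicit verification that post-composition with a fixed simplicial map preserves contiguity classes is a welcome addition--the paper uses this fact silently in the step ``$\beta\circ\alpha\sim\id_K$ implies $\varphi\circ\beta\circ\alpha\sim\varphi$''--but it does not change the structure of the argument.
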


\begin{proof}  Since $\beta \circ \alpha \sim \id_{K}$ it follows that $\varphi \circ \beta\circ \alpha \sim \varphi$ and $\psi \circ \beta\circ\alpha \sim \psi$. So 
\[
\sd(\varphi,\psi)=\sd(\varphi\circ \beta\circ\alpha,\psi \circ \beta\circ\alpha)\leq \sd(\varphi\circ \beta,\psi\circ \beta) \leq \sd(\varphi,\psi)
\]
where the equality follows from Proposition~\ref{prop:contiguity} and the inequalities follow from Proposition~\ref{prop4}. Hence we have $\sd(\varphi\circ \beta,\psi\circ\beta)=\sd(\varphi, \psi)$.\\

\end{proof}

\begin{proposition}\label{prop:homotopy} 
Let $\varphi, \psi\colon K \to K'$ and $\phi, \phi'\colon K' \to  M$ be simplicial maps. If $\phi\sim\phi'$, then $\sd(\phi \circ \varphi, \phi'\circ \psi)\leq \sd(\varphi,\psi)$. 
\end{proposition}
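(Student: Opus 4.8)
The plan is to reuse the very covering that witnesses $\sd(\varphi,\psi)$ and to verify that it also produces an upper bound for $\sd(\phi\circ\varphi,\phi'\circ\psi)$. So I would set $\sd(\varphi,\psi)=n$ and fix subcomplexes $K_0,K_1,\dotsc,K_n$ covering $K$ with $\varphi|_{K_j}\sim\psi|_{K_j}$ for every $j$. Since this same collection already covers $K$, it suffices to prove that $(\phi\circ\varphi)|_{K_j}\sim(\phi'\circ\psi)|_{K_j}$ for each $j$; establishing this would immediately yield $\sd(\phi\circ\varphi,\phi'\circ\psi)\leq n=\sd(\varphi,\psi)$.

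The core of the argument is a two-step comparison carried out inside the contiguity equivalence relation, bridging the mismatch in both the inner and the outer maps. First, observing that $(\phi\circ\varphi)|_{K_j}=\phi\circ(\varphi|_{K_j})$, I would left-compose the relation $\varphi|_{K_j}\sim\psi|_{K_j}$ with $\phi$ to obtain $\phi\circ(\varphi|_{K_j})\sim\phi\circ(\psi|_{K_j})$. Second, using the hypothesis $\phi\sim\phi'$, I would right-compose by $\psi|_{K_j}$ to get $\phi\circ(\psi|_{K_j})\sim\phi'\circ(\psi|_{K_j})$. Chaining these through transitivity of the contiguity relation gives
\[
(\phi\circ\varphi)|_{K_j}=\phi\circ(\varphi|_{K_j})\sim\phi\circ(\psi|_{K_j})\sim\phi'\circ(\psi|_{K_j})=(\phi'\circ\psi)|_{K_j},
\]
which is exactly what is needed for each $j$.

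The only nontrivial point, which I would isolate as the main obstacle, is justifying that contiguity interacts well with composition: that $\alpha\sim\beta$ implies both $g\circ\alpha\sim g\circ\beta$ and $\alpha\circ h\sim\beta\circ h$ for simplicial maps $g,h$. This reduces to the elementary single step: if $\alpha\sim_c\beta$, then for every simplex $\sigma$ the set $\alpha(\sigma)\cup\beta(\sigma)$ is a simplex, so its image $g(\alpha(\sigma))\cup g(\beta(\sigma))$ under the simplicial map $g$ is again a simplex, giving $g\circ\alpha\sim_c g\circ\beta$; the precomposition statement is even more immediate. Extending these facts along a finite contiguity chain and invoking the equivalence-relation structure of $\sim$ then upgrades them to the level of contiguity classes, completing the proof.
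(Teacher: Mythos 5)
Your proof is correct and follows essentially the same route as the paper: restrict to the covering witnessing $\sd(\varphi,\psi)$ and bridge $(\phi\circ\varphi)|_{K_j}$ to $(\phi'\circ\psi)|_{K_j}$ by a two-step transitivity argument, changing the inner and outer maps one at a time (the paper passes through $\phi'\circ\varphi|_{K_j}$ rather than your $\phi\circ\psi|_{K_j}$, an immaterial difference). In fact your write-up is tidier than the paper's, which contains typos (subcomplexes attributed to $K'$ instead of $K$, mismatched indices), and you supply the composition-compatibility of contiguity that the paper uses tacitly.
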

\begin{proof} Suppose $\sd(\varphi,\psi)=n$. Then there exist subcomplexes $K'_0, K'_1, \ldots, K'_n$ of $K'$ such that $\varphi{\big|}_{K'_i}$ and $\psi{\big|}_{K'_j}$ are in the same contiguity class for all $i,j$. So 
\[
(\phi \circ \varphi){\big|}_{K'_i}=\phi \circ \varphi{\big|}_{K'_i} \sim \phi' \circ \varphi{\big|}_{K'_i} \sim \phi' \circ \varphi{\big|}_{K'_j} = (\phi' \circ \varphi){\big|}_{K'_j}.
\]
Hence  $\sd(\phi \circ \varphi, \phi'\circ \psi)\leq n$.\\
\end{proof}

\begin{corollary}\label{cor:homotopy} 
Let $\varphi, \psi\colon K \to K'$ be simplicial maps and $\alpha\colon K' \to M$ be a simplicial map  which has a left strong equivalence (that is, $\alpha$ satisfies $\beta \circ \alpha \sim \id_{K'}$ where $\beta\colon M \to K'$). Then $\sd(\alpha\circ \varphi,\alpha\circ\psi)=\sd(\varphi, \psi)$.
\end{corollary}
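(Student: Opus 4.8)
The plan is to mirror the proof of Corollary~\ref{cor2}, replacing the right-composition estimate Proposition~\ref{prop4} with its left-composition counterpart Proposition~\ref{prop:homotopy}, and then sandwiching $\sd(\varphi, \psi)$ between two copies of $\sd(\alpha \circ \varphi, \alpha \circ \psi)$ to force equality. The whole argument is formal, built entirely on the two earlier results Proposition~\ref{prop:homotopy} and Proposition~\ref{prop:contiguity}.

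First I would establish the inequality $\sd(\alpha \circ \varphi, \alpha \circ \psi) \leq \sd(\varphi, \psi)$. For this I apply Proposition~\ref{prop:homotopy} with $\phi = \phi' = \alpha$. Since contiguity is a reflexive relation we have $\alpha \sim \alpha$, so the proposition applies verbatim and delivers this bound immediately.

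For the reverse inequality I would exploit the left strong equivalence $\beta$. Because $\beta \circ \alpha \sim \id_{K'}$, composing on the right with $\varphi$ and with $\psi$ gives $\beta \circ \alpha \circ \varphi \sim \varphi$ and $\beta \circ \alpha \circ \psi \sim \psi$. By Proposition~\ref{prop:contiguity} the contiguity distance is unchanged when maps are replaced by contiguous ones, hence $\sd(\varphi, \psi) = \sd(\beta \circ \alpha \circ \varphi, \beta \circ \alpha \circ \psi)$. Applying Proposition~\ref{prop:homotopy} a second time, now with $\phi = \phi' = \beta$ acting on the maps $\alpha \circ \varphi, \alpha \circ \psi$, yields $\sd(\beta \circ \alpha \circ \varphi, \beta \circ \alpha \circ \psi) \leq \sd(\alpha \circ \varphi, \alpha \circ \psi)$. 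Chaining these together produces
\[
\sd(\varphi, \psi) = \sd(\beta \circ \alpha \circ \varphi, \beta \circ \alpha \circ \psi) \leq \sd(\alpha \circ \varphi, \alpha \circ \psi) \leq \sd(\varphi, \psi),
\]
so equality holds throughout and the corollary follows.

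I do not anticipate any serious obstacle, since the proof is purely formal. The only point requiring attention is keeping the sides of composition straight: here $\alpha$ sits on the \emph{left}, so the governing tool is Proposition~\ref{prop:homotopy}, whereas Corollary~\ref{cor2} treated the symmetric situation with $\beta$ on the right via Proposition~\ref{prop4}. One should also note that associativity lets $\beta \circ \alpha \circ \varphi$ be read both as $(\beta \circ \alpha) \circ \varphi$, to invoke Proposition~\ref{prop:contiguity}, and as $\beta \circ (\alpha \circ \varphi)$, to invoke Proposition~\ref{prop:homotopy}; this is immediate and needs no further comment.
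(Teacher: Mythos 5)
Your proof is correct and follows essentially the same route as the paper: both establish $\sd(\alpha\circ\varphi,\alpha\circ\psi)\leq\sd(\varphi,\psi)$ by Proposition~\ref{prop:homotopy} with $\phi=\phi'=\alpha$, and then use $\beta\circ\alpha\sim\id_{K'}$ together with Proposition~\ref{prop:contiguity} and a second application of Proposition~\ref{prop:homotopy} (with $\phi=\phi'=\beta$) to sandwich $\sd(\varphi,\psi)$ and force equality. Your write-up is in fact slightly more explicit than the paper's about which substitutions are being made in each application.
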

\begin{proof} Since $\beta \circ \alpha \sim \id_{K'}$ it follows that $\beta\circ \alpha \circ\varphi \sim \varphi$ and $\beta\circ\alpha \circ \psi \sim \psi$. So 
\[
\sd(\varphi,\psi)=\sd(\beta\circ\alpha\circ \varphi,\beta\circ\alpha\circ\psi)\leq \sd(\alpha\circ \varphi,\alpha\circ \psi) \leq \sd(\varphi,\psi)
\]
where the equality follows from Proposition~\ref{prop:contiguity} and the inequalities follow from Proposition~\ref{prop:homotopy}. Hence we have $\sd(\alpha\circ\varphi,\alpha\circ\psi)=\sd(\varphi,\psi)$.\\
\end{proof}
\begin{theorem}\label{invariant} If $\beta: K' \sim K$ and $\alpha: L\sim L'$ have the same strong homotopy type and if simplicial maps $\varphi, \psi: K\rightarrow L$ and $\varphi',\psi': K'\rightarrow L'$ make the following diagrams commutative with respect to $f$ and $g$ respectively, in the sense of contiguity (that is, $\alpha\circ \varphi\circ \beta\sim \varphi'$ and $\alpha\circ\psi\circ\beta\sim\psi'$), then we have $\sd(\varphi,\psi)=\sd(\varphi',\psi')$.

\[
\begin{tikzcd}
K \arrow[r, "\varphi", shift left] \arrow[r, "\psi"', shift right]  
& L \arrow[d, "\alpha"] \\
K' \arrow[u, "\beta"] \arrow[r, "\varphi'", shift left] \arrow[r, "\psi'"', shift right] 
& L'
\end{tikzcd}
\]
\end{theorem}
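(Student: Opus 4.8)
The plan is to reduce everything to the two cancellation results already established, namely Corollary~\ref{cor2} and Corollary~\ref{cor:homotopy}, after first rewriting $\varphi'$ and $\psi'$ in terms of the composites that appear in the hypothesis. First I would invoke the assumption that $\alpha\circ\varphi\circ\beta\sim\varphi'$ and $\alpha\circ\psi\circ\beta\sim\psi'$ together with Proposition~\ref{prop:contiguity}, which guarantees that $\sd$ depends only on contiguity classes, to obtain
\[
\sd(\varphi',\psi')=\sd(\alpha\circ\varphi\circ\beta,\ \alpha\circ\psi\circ\beta).
\]
This converts the problem into peeling off the outer map $\alpha$ and the inner map $\beta$ from the composite.

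Next I would record that both $\alpha$ and $\beta$ are strong equivalences, which is precisely what $L\sim L'$ and $K'\sim K$ mean by the characterisation of strong homotopy type recalled in Section~1. Hence there exist simplicial maps $\delta\colon L'\to L$ with $\delta\circ\alpha\sim\id_L$ and $\gamma\colon K\to K'$ with $\beta\circ\gamma\sim\id_K$. In particular $\alpha$ admits a left strong equivalence in the sense of Corollary~\ref{cor:homotopy}, and $\beta$ admits a right strong equivalence in the sense of Corollary~\ref{cor2}.

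Then I would cancel $\alpha$ first. Applying Corollary~\ref{cor:homotopy} to the maps $\varphi\circ\beta,\psi\circ\beta\colon K'\to L$ post-composed with $\alpha$ yields
\[
\sd(\alpha\circ\varphi\circ\beta,\ \alpha\circ\psi\circ\beta)=\sd(\varphi\circ\beta,\ \psi\circ\beta).
\]
Finally I would cancel $\beta$ by applying Corollary~\ref{cor2} to $\varphi,\psi\colon K\to L$ pre-composed with $\beta$, giving $\sd(\varphi\circ\beta,\psi\circ\beta)=\sd(\varphi,\psi)$. Chaining these three equalities produces $\sd(\varphi',\psi')=\sd(\varphi,\psi)$, as claimed.

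I do not expect a genuine obstacle here, since the substantive work has already been absorbed into Corollaries~\ref{cor2} and~\ref{cor:homotopy}; the theorem is essentially their combination. The only point requiring care is the bookkeeping of directions: one must verify that a strong equivalence supplies exactly the one-sided inverse up to contiguity demanded by each corollary — $\beta\circ\gamma\sim\id_K$ for the pre-composition step and $\delta\circ\alpha\sim\id_L$ for the post-composition step — and that the domains and codomains match so that each corollary applies verbatim.
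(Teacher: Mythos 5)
Your proof is correct and follows essentially the same route as the paper: both reduce $\sd(\varphi',\psi')$ to $\sd(\alpha\circ\varphi\circ\beta,\alpha\circ\psi\circ\beta)$ via Proposition~\ref{prop:contiguity}, then cancel $\alpha$ by Corollary~\ref{cor:homotopy} and $\beta$ by Corollary~\ref{cor2}. Your additional care in checking that the strong equivalences $\alpha$ and $\beta$ supply exactly the one-sided inverses ($\delta\circ\alpha\sim\id_L$ and $\beta\circ\gamma\sim\id_K$) required by each corollary is a useful explicit verification that the paper leaves implicit.
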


\begin{proof} $\sd(\varphi',\psi')=\sd(\alpha\circ\varphi\circ\beta,\alpha\circ\psi\circ\beta)=\sd(\varphi\circ\beta,\psi\circ\beta)=\sd(\varphi,\psi)$\\

where the second equality follows from Corollary~\ref{cor:homotopy} and the last equality follows from Corollary~\ref{cor2}. 
\end{proof}

\begin{remark} Notice that the result of Theorem~\ref{invariant} is still valid even if we consider $\beta$ and $\alpha$ as right and left strong equivalences, respectively. \\
\end{remark}

The simplicial LS category of a simplicial map is defined as in the following definition.

\begin{definition}\cite{SS} 
Let $\varphi \colon K \to K'$ be a simplicial map and $\omega_0$ be a vertex of $K'$.  Simplicial LS category \textbf{$\scat(\varphi)$} of $\varphi$  is defined to be the least integer $n$  such that there exists a covering of $K$ by subcomplexes $K_0, K_1, \ldots, K_n$ such that  $\varphi{\big|}_{K_j}\colon K_j \to K'$ and  constant map $c_{\omega_0}\colon K_j \to K$ are in the same contiguity class for all $j$.\\
\end{definition}

\begin{corollary} Let $\varphi\colon K \to K'$ be a simplicial map. Then $\scat(\varphi)\leq \min\{ \scat(K), \scat(K')\}$.
\end{corollary}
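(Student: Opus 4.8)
The plan is to prove the two inequalities $\scat(\varphi) \leq \scat(K)$ and $\scat(\varphi) \leq \scat(K')$ separately, since the minimum of the two upper bounds then follows immediately. In both cases I would relate $\scat(\varphi)$ to the contiguity distance of appropriate maps, so that the machinery established earlier in the paper—especially Proposition~\ref{prop4} and Proposition~\ref{prop:homotopy}—can be applied. The key observation is that $\scat(\varphi)$ measures, via a cover of $K$, when the restrictions $\varphi{\big|}_{K_j}$ are contiguously constant; in the language of contiguity distance this reads $\scat(\varphi) = \sd(\varphi, c_{\omega_0})$, where $c_{\omega_0}\colon K \to K'$ is the constant map at a vertex $\omega_0$ of $K'$. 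Recording this identity first would let me treat the whole corollary as a statement about contiguity distances.

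For the bound $\scat(\varphi) \leq \scat(K)$, I would write $\varphi = \varphi \circ \id_K$ and view $\scat(K) = \sd(\id_K, c_{v_0})$ from the identity established just after Theorem~\ref{thm:barycentric} in the text, where $v_0$ is a vertex of $K$. Postcomposing with $\varphi$ and applying Proposition~\ref{prop4} gives $\sd(\varphi \circ \id_K, \varphi \circ c_{v_0}) \leq \sd(\id_K, c_{v_0}) = \scat(K)$. The left-hand side is $\sd(\varphi, \varphi \circ c_{v_0})$, and $\varphi \circ c_{v_0}$ is the constant map at the vertex $\varphi(v_0)$ of $K'$; since $K'$ is edge-path connected, all constant maps into $K'$ lie in the same contiguity class, so by Proposition~\ref{prop:contiguity} this equals $\sd(\varphi, c_{\omega_0}) = \scat(\varphi)$. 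This chain yields $\scat(\varphi) \leq \scat(K)$.

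For the bound $\scat(\varphi) \leq \scat(K')$, I would instead precompose on the target side. Writing $\scat(K') = \sd(\id_{K'}, c_{\omega_0})$ and applying Proposition~\ref{prop:homotopy} with the contiguous pair $\id_{K'} \sim \id_{K'}$ postcomposed against $\varphi$ on both slots gives $\sd(\id_{K'} \circ \varphi, c_{\omega_0} \circ \varphi) \leq \sd(\id_{K'}, c_{\omega_0}) = \scat(K')$. Here $\id_{K'} \circ \varphi = \varphi$ and $c_{\omega_0} \circ \varphi$ is again the constant map at $\omega_0$, so the left-hand side is exactly $\sd(\varphi, c_{\omega_0}) = \scat(\varphi)$, giving $\scat(\varphi) \leq \scat(K')$. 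Combining the two inequalities produces $\scat(\varphi) \leq \min\{\scat(K), \scat(K')\}$.

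The main obstacle, such as it is, lies in verifying the identity $\scat(\varphi) = \sd(\varphi, c_{\omega_0})$ cleanly, since the definition of $\scat(\varphi)$ as stated involves a constant map $c_{\omega_0}\colon K_j \to K$ (into $K$), which appears to be a typo for a map into $K'$; I would interpret it as the constant map into $K'$ so that the comparison with $\sd(\varphi, c_{\omega_0})$ makes sense. Beyond this bookkeeping, the only subtlety is the repeated appeal to edge-path connectedness of $K'$ to collapse distinct constant maps into a single contiguity class, which is exactly the standing hypothesis at the start of Section~2, so no new work is required there. The argument is therefore essentially a two-line application of the composition propositions once the distance reformulation is in place.
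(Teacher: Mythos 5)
Your proposal takes essentially the same route as the paper's proof: both inequalities are obtained by rewriting $\scat(\varphi)$ as the contiguity distance $\sd(\varphi,c_{\omega_0})$ (using edge-path connectedness of $K'$ to identify constant maps, and reading the constant map in the definition of $\scat(\varphi)$ as landing in $K'$, as you do) and then applying the composition estimates. The one concrete error is that your two citations are swapped. Proposition~\ref{prop4} is the precomposition estimate $\sd(\varphi\circ\mu,\psi\circ\mu)\leq\sd(\varphi,\psi)$ (same map $\mu$ composed on the right, differing maps on the left), while Proposition~\ref{prop:homotopy} with $\phi=\phi'$ is the postcomposition estimate $\sd(\phi\circ\varphi,\phi\circ\psi)\leq\sd(\varphi,\psi)$ (same map composed on the left, differing maps on the right). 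Your first bound postcomposes the differing pair $\id_K, c_{v_0}$ with the single map $\varphi$, so the step $\sd(\varphi\circ\id_K,\varphi\circ c_{v_0})\leq\sd(\id_K,c_{v_0})$ is an instance of Proposition~\ref{prop:homotopy} (with $\phi=\phi'=\varphi$), not of Proposition~\ref{prop4}; this is also the citation the paper uses. Your second bound precomposes the differing pair $\id_{K'}, c_{\omega_0}$ with the single map $\varphi$, so $\sd(\id_{K'}\circ\varphi,c_{\omega_0}\circ\varphi)\leq\sd(\id_{K'},c_{\omega_0})$ is an instance of Proposition~\ref{prop4} (with $\mu=\varphi$); invoking Proposition~\ref{prop:homotopy} with the pair $\id_{K'}\sim\id_{K'}$, as you describe, only yields the vacuous inequality $\sd(\varphi,c_{\omega_0}\circ\varphi)\leq\sd(\varphi,c_{\omega_0}\circ\varphi)$ rather than the bound by $\scat(K')$. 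With the two citations interchanged, your argument coincides with the paper's proof.
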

\begin{proof} Let $\id_K\colon K \to K$ be the identity map and $c_{v_0}\colon K \to K$ be the constant map at the vertex $v_0$ in $K$. 
\[
\scat(\varphi)=\sd(\varphi, \varphi\circ c_{v_0})=\sd(\varphi \circ \id_K, \varphi\circ c_{v_0})\leq \sd(\id_K,c_{\varphi(v_0)})=\scat(K)
\]
where the inequality follows from Proposition~\ref{prop:homotopy}. Hence $\scat(\varphi)\leq \scat(K)$. \\

\noindent On the other hand, we have
\[
\scat(K')=\sd(\id_{K'}, c_{\omega_0})\geq \sd(\id_{K'}\circ \varphi, c_{\omega_0}\circ \varphi)=\scat(\varphi)
\]
where $\id_{K'}\colon K' \to K'$ is the identity map and $c_{\omega_0}\colon K' \to K'$ is the constant map at the vertex $\omega_0$ in $K'$. Thus $\scat(\varphi)\leq \scat(K')$.\\
\end{proof}

Let $K$ be a simplicial complex and $p_1,p_2\colon K^2 \to K$ projection maps onto the first and second factors, respectively. The following theorem is first proved in \cite[Theorem~3.4]{TVMV:2018} (see also \cite[Example~8.2]{VM:2019}). Here, we provide an alternative proof using contiguity distance. \\

\begin{theorem}\label{thm2}
	For a simplicial complex $K$, we have  $\sd(p_1,p_2)=\tc(K)$.
\end{theorem}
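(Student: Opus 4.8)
The plan is to reduce the theorem to a single characterization: a subcomplex $\Omega \subseteq K^2$ is a Farber subcomplex if and only if $p_1\big|_\Omega \sim p_2\big|_\Omega$. Once this is in hand, the equality $\sd(p_1,p_2)=\tc(K)$ is immediate, since both sides are by definition the least $n$ for which $K^2$ admits a cover by $(n+1)$ subcomplexes satisfying the respective condition, and the two conditions agree on every subcomplex.

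To establish the characterization I would first record the defining property of the categorical product $K^2=K\prod K$: a set of vertices $\{(a_0,b_0),\dotsc,(a_k,b_k)\}$ spans a simplex of $K^2$ exactly when $\{a_0,\dotsc,a_k\}$ and $\{b_0,\dotsc,b_k\}$ span simplices of $K$. By the universal property, any simplicial map $f\colon\Omega\to K^2$ is determined by the pair $(p_1\circ f,\,p_2\circ f)$, and conversely any compatible pair of maps into $K$ assembles into a map into $K^2$. From the product description of simplices one then reads off a \emph{coordinatewise criterion}: for $f,g\colon\Omega\to K^2$ one has $f\sim_c g$ if and only if $p_1\circ f\sim_c p_1\circ g$ and $p_2\circ f\sim_c p_2\circ g$, because $f(\tau)\cup g(\tau)$ is a simplex of $K^2$ precisely when each of its two coordinate projections is a simplex of $K$. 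Passing to the equivalence $\sim$, the same holds: $f\sim g$ if and only if $p_1\circ f\sim p_1\circ g$ and $p_2\circ f\sim p_2\circ g$.

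Granting the coordinatewise criterion, both implications are short. If $\Omega$ is Farber with witness $\sigma\colon\Omega\to K$ satisfying $\Delta\circ\sigma\sim\iota_\Omega$, then applying the criterion to $f=\Delta\circ\sigma$ and $g=\iota_\Omega$, and using $p_i\circ\Delta=\id_K$ (so $p_i\circ\Delta\circ\sigma=\sigma$) together with $p_i\circ\iota_\Omega=p_i\big|_\Omega$, gives $\sigma\sim p_1\big|_\Omega$ and $\sigma\sim p_2\big|_\Omega$, whence $p_1\big|_\Omega\sim p_2\big|_\Omega$ by transitivity. Conversely, if $p_1\big|_\Omega\sim p_2\big|_\Omega$, I would take $\sigma=p_1\big|_\Omega$; then $\Delta\circ\sigma$ and $\iota_\Omega$ have coordinate pairs $(p_1\big|_\Omega,\,p_1\big|_\Omega)$ and $(p_1\big|_\Omega,\,p_2\big|_\Omega)$ respectively, so the criterion (reflexivity in the first coordinate, the hypothesis in the second) yields $\Delta\circ\sigma\sim\iota_\Omega$, exhibiting $\Omega$ as a Farber subcomplex.

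The main obstacle is the lift of the coordinatewise criterion from the single-step relation $\sim_c$ to the equivalence $\sim$. Two contiguity sequences witnessing $p_1\circ f\sim p_1\circ g$ and $p_2\circ f\sim p_2\circ g$ may have different lengths, so one pads the shorter one by repeating its last map, which is legitimate since $\sim_c$ is reflexive; then one pairs the sequences term by term. The point that needs care is verifying that each paired term is a genuine simplicial map into $K^2$ and that consecutive paired terms remain contiguous, and this is exactly where the universal property and the product description of simplices are invoked.
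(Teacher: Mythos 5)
Your proposal is correct, and on the crucial direction it follows a genuinely different and in fact more careful route than the paper. Both proofs handle the implication ``Farber subcomplex $\Rightarrow$ $p_1\big|_\Omega \sim p_2\big|_\Omega$'' the same way: compose $\Delta\circ\sigma\sim\iota_\Omega$ with the two projections and use $p_i\circ\Delta=\id_K$. The difference is in the converse. The paper assumes, ``WLOG,'' that $p_1\big|_{L_i}$ and $p_2\big|_{L_i}$ are contiguous in a \emph{single} step ($\sim_c$ rather than $\sim$), and then defines the section $\sigma_i$ simplexwise as the union $p_1(\tau)\cup p_2(\tau)$; this reduction is never justified (a contiguity class may require a chain of intermediate maps, and the union construction genuinely needs the one-step hypothesis to produce simplices), and moreover the resulting $\sigma_i$ is described on simplices rather than on vertices, so it is not literally a simplicial map as written. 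Your argument avoids both problems: you take the honest simplicial map $\sigma=p_1\big|_\Omega$ and prove $\Delta\circ\sigma\sim\iota_\Omega$ via a coordinatewise criterion for contiguity classes of maps into the categorical product, with the padding-and-pairing argument supplying the passage from $\sim_c$ to $\sim$ for chains of arbitrary (and different) lengths. That criterion is the key lemma the paper lacks; it is exactly where the product description of simplices in $K\prod K$ enters, and it makes the characterization ``$\Omega$ is Farber $\iff p_1\big|_\Omega\sim p_2\big|_\Omega$'' (and hence the theorem, by comparing covers) fully rigorous. In short, your route costs one extra lemma but buys a complete proof; the paper's route is more concrete but, as written, has a gap precisely at the step your lemma handles.
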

\begin{proof}
We first show that $\tc(K)\leq \sd(p_1,p_2)$. Suppose $\tc(K)=n$. Then there is a covering for $K^2$ which consists of Farber subcomplexes $L_0, L_1, \dotsc, L_n$. Since each $L_i$ is a Farber subcomplex, there exits a simplicial map $\sigma_i\colon L_i \to K$ such that $\Delta\circ \sigma_i \sim \iota_{L_i}$.  

\begin{align*}
\Delta\circ \sigma_i &\sim \iota_{L_i} \\
p_1\circ(\Delta\circ \sigma_i) &\sim p_1 \circ \iota_{L_i}=p_1{\big|}_{L_i} \\
p_2\circ(\Delta\circ \sigma_i) &\sim p_2 \circ \iota_{L_i} =p_2{\big|}_{L_i}\\
\end{align*}

Since $p_1\circ (\Delta\circ \sigma_i)=p_2\circ (\Delta\circ \sigma_i)$, we have $p_1{\big|}_{L_i}\sim p_2{\big|}_{L_i}$. \\
	
\noindent Next we will show that $\tc(K)\leq \sd(p_1,p_2)$. Suppose $\sd(p_1,p_2)=n$. Then there exist subcomplexes $L_0,L_1,\dotsc,L_n$ which cover $K^2$ and $p_1{\big|}_{L_i} \sim p_2{\big|}_{L_i}$ for $i=1,2,\dotsc,n$. WLOG, we assume $p_1{\big|}_{L_i} \sim_c p_2{\big|}_{L_i}$. This means that for an element $\big([x],[y]\big)$ in $L_i$ where  $[x]=\{x_1,x_2,\dotsc,x_k\}$ and $[y]=\{y_1,y_2,\dotsc,y_m\}$, 
\[
p_1\bigg(\big([x],[y]\big)\bigg) \cup p_2\bigg(\big([x],[y]\big)\bigg) = \{x_1,\dotsc, x_k, y_1, \dotsc,y_m\}
\] 
is a simplex in $K$.\\
	
\noindent We define a simplicial map $\sigma_i\colon L_i \to K$ so that
\begin{equation*}
\xymatrix{L_i \ar[r]^{\sigma}  & K \ar[r]^{\Delta}  & K^2}
\end{equation*}
$\Delta \circ \sigma_i \sim_c \iota_{L_i}$. \\
	
\noindent 
Define 
\[
\sigma_i \bigg(\big([x],[y]\big)\bigg) = p_1{\big|}_{L_i}\bigg(\big([x],[y]\big)\bigg) \cup  p_1{\big|}_{L_i}  \bigg(\big([x],[y]\big)\bigg) = \{x_1,\dotsc,x_k, y_1, \dotsc,y_m\} = 
\{x_1, \dotsc,x_k, y_1, \dotsc,y_m\}.
\]  

$\Delta \circ \sigma_i \bigg(\big([x],[y]\big)\bigg)= \big(\{x_1,\dotsc,x_k, y_1,\dotsc,y_m\}, \{x_1,\dotsc,x_k, y_1,\dotsc,y_m\}\big)$. \\
	 
$\iota_{L_i}\bigg(\big([x],[y]\big)\bigg)=\big([x],[y]\big) =\big(\{x_1,\dotsc,x_k\}, \{y_1,\dotsc,y_m\}\big)$. 
So, $L_i$ is also a Farber subcomplex.\\
\end{proof}

There is a well-known inequality between topological complexity and $LS$-category of  a  topological space $X$. The same inequality holds for simplicial complexes (see \cite[Theorem~4.3]{TVMV:2018}). In the following, we provide a proof in terms of contiguity distance.\\

\begin{theorem}
	For a simplicial complex $K$, we have $\mathrm{scat}(K)\leq TC(K)$.
\end{theorem}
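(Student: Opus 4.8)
The plan is to reduce the inequality to the composition results already in hand, exploiting the two identities $\scat(K)=\sd(\id_K,c_{v_0})$ and $\tc(K)=\sd(p_1,p_2)$ (the latter being Theorem~\ref{thm2}). The whole point is to locate a single simplicial map $K\to K^2$ that, upon composition, carries the projection pair $(p_1,p_2)$ to the pair $(\id_K,c_{v_0})$ that computes $\scat$.

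First I would take the inclusion $i_1\colon K\to K^2$, $i_1(v)=(v,v_0)$, introduced in Theorem~\ref{thm:inclusion}. A direct computation on vertices gives $p_1\circ i_1=\id_K$ and $p_2\circ i_1=c_{v_0}$, since $p_1(v,v_0)=v$ and $p_2(v,v_0)=v_0$; both compositions are genuine simplicial maps because $i_1$ sends a simplex $\sigma$ of $K$ to a set of vertices of $K^2$ whose first projection is $\sigma$ and whose second projection is the single vertex $v_0$, hence a simplex of the categorical product. Then I would apply Proposition~\ref{prop4} with $\varphi=p_1$, $\psi=p_2$, and $\mu=i_1\colon K\to K^2$, which yields $\sd(p_1\circ i_1,\,p_2\circ i_1)\leq \sd(p_1,p_2)$.

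Substituting the two identities above and invoking Theorem~\ref{thm2} then produces the chain
\[
\scat(K)=\sd(\id_K,c_{v_0})=\sd(p_1\circ i_1,\,p_2\circ i_1)\leq \sd(p_1,p_2)=\tc(K),
\]
which is exactly the desired bound. The argument carries essentially no computational content: the only point needing care is verifying that $p_1\circ i_1$ and $p_2\circ i_1$ are literally the identity and the constant map as simplicial maps, which is just well-definedness on simplices of $K^2$. Thus the ``main obstacle'' is conceptual rather than technical, namely recognizing that precomposing the projection pair with $i_1$ is precisely the move that collapses $(p_1,p_2)$ to $(\id_K,c_{v_0})$. I note that precomposing instead with $i_2$ works identically, giving $p_1\circ i_2=c_{v_0}$ and $p_2\circ i_2=\id_K$ and the same inequality.
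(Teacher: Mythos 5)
Your proof is correct and follows essentially the same route as the paper: precompose the projections $p_1,p_2\colon K^2\to K$ with the inclusion $i_1(v)=(v,v_0)$ to obtain $\id_K$ and $c_{v_0}$, then apply the precomposition inequality and Theorem~\ref{thm2}. In fact your citation of Proposition~\ref{prop4} is the accurate one; the paper's proof invokes Proposition~\ref{prop:homotopy} (and writes $p_2\circ i_2$ where $p_2\circ i_1$ is meant), which are evidently slips, since the step used is exactly the precomposition bound $\sd(p_1\circ i_1, p_2\circ i_1)\leq \sd(p_1,p_2)$.
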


\begin{proof}
	Consider the following composition of maps
	 \begin{equation*}
	\xymatrix{
		K  \ar[r]^{i_1}  & K^2   \ar[r]^{p_1}  &  K,}
	\end{equation*}
  \begin{equation*}
\xymatrix{
	v   \ar[r]^{i_1}  & (v, v_0)     \ar[r]^{p_1}  &  v,}
\end{equation*}
\noindent 
and note that $p_1 \circ i_1 = id_K$.
Similarly consider the composition of maps
	 \begin{equation*}
\xymatrix{
	K  \ar[r]^{i_1}  & K^2   \ar[r]^{p_2}  &  K,}
\end{equation*}
\begin{equation*}
\xymatrix{
v   \ar[r]^{i_1}  & (v, v_0)     \ar[r]^{p_2}  &  v_0,}
\end{equation*}
\noindent and we have $p_2 \circ i_1 = c_{v_0}$.  By Proposition~\ref{prop:homotopy}, 
\begin{align*}
& SD(p_1\circ i_1, p_2\circ i_2) \leq SD(p_1,p_2) =TC(K) \\
\Rightarrow  \quad &  SD(id_K, c_{v_0}) \leq TC(K) \\
\Rightarrow \quad  & \mathrm{scat}(K)\leq TC(K).
\end{align*}
\end{proof}

\begin{corollary} \label{cor:sdscat}
Let $\varphi, \psi\colon K \to K'$ be two simplicial maps (and $K'$ be edge path connected). Then $\sd(\varphi, \psi)\leq \scat(K)$. 
\end{corollary}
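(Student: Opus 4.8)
The plan is to use the characterization $\scat(K)=\sd(\id_K,c_{v_0})$ together with the categorical covering that realizes it, and to show that this very covering already witnesses $\sd(\varphi,\psi)\leq\scat(K)$. So first I would set $\scat(K)=n$ and choose subcomplexes $K_0,\dots,K_n$ covering $K$, each one categorical: for each $j$ there is a vertex $v_j$ with $\iota_j\sim c_{v_j}$ as maps $K_j\to K$, where $\iota_j\colon K_j\hookrightarrow K$ is the inclusion (equivalently $\id_K|_{K_j}\sim c_{v_j}|_{K_j}$).

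Next I would restrict $\varphi$ and $\psi$ to each $K_j$ and push the contiguity $\iota_j\sim c_{v_j}$ forward. Since post-composition with a simplicial map preserves contiguity classes --- if $f\sim_c g$ then $h\circ f\sim_c h\circ g$, because $h(f(\sigma))\cup h(g(\sigma))=h(f(\sigma)\cup g(\sigma))$ is again a simplex, and this extends to $\sim$ --- I obtain $\varphi|_{K_j}=\varphi\circ\iota_j\sim\varphi\circ c_{v_j}=c_{\varphi(v_j)}$ and likewise $\psi|_{K_j}\sim c_{\psi(v_j)}$, each of these being a constant map $K_j\to K'$.

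The step where the hypothesis on $K'$ enters, and the only one requiring care, is concluding that the two constants agree up to contiguity: because $K'$ is edge-path connected, any two constant maps $K_j\to K'$ lie in the same contiguity class, so $c_{\varphi(v_j)}\sim c_{\psi(v_j)}$. Transitivity of $\sim$ then chains these together as $\varphi|_{K_j}\sim c_{\varphi(v_j)}\sim c_{\psi(v_j)}\sim\psi|_{K_j}$, giving $\varphi|_{K_j}\sim\psi|_{K_j}$ for every $j$. As $K_0,\dots,K_n$ is a covering of $K$ on which $\varphi$ and $\psi$ become contiguous, the definition of contiguity distance yields $\sd(\varphi,\psi)\leq n=\scat(K)$.

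I expect no serious obstacle; the argument is essentially the localization of Theorem~\ref{thm:collapsible} to each categorical piece, and the point to state explicitly rather than a genuine difficulty is the edge-path-connectedness of $K'$, without which $c_{\varphi(v_j)}$ and $c_{\psi(v_j)}$ need not be contiguous and the bound can fail. Alternatively, the same conclusion can be packaged through Proposition~\ref{prop:homotopy}: applying it with the inner maps $\id_K,c_{v_0}\colon K\to K$ gives $\sd(\varphi,c_{\varphi(v_0)})\leq\scat(K)$ and $\sd(\psi,c_{\psi(v_0)})\leq\scat(K)$ on this common covering, after which edge-path-connectedness merges the two constants --- but the direct covering argument above is the most transparent.
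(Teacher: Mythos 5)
Your proof is correct and is essentially the paper's own argument: the paper applies Proposition~\ref{prop:composition} with $K''=K$, $\eta=\id_K$, $\eta'=c_{v_0}$ (using edge-path connectedness of $K'$ to get $\varphi\circ c_{v_0}\sim\psi\circ c_{v_0}$), and unfolding that proposition's proof yields exactly your direct covering argument with post-composition and transitivity. The only cosmetic difference is that you allow a different vertex $v_j$ on each categorical piece rather than a single $v_0$, which is harmless since $K$ is assumed connected.
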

\begin{proof}
If we take $K''=K$, $\eta=\id_K$  and $\eta'=c_{v_0}$ a  constant map in Proposition~\ref{prop:composition}, then the constant maps $\varphi\circ c_{v_0}$ and $\psi\circ c_{v_0}\colon K\to K'$ are   in the same contiguity class since $K'$ is edge path connected. By Proposition~5 and Theorem~1 we have
 	\[ \sd(\varphi, \psi) = \sd(\varphi\circ \id_K, \psi\circ \id_K) \leq \sd (\id_K, c_{v_0}) =\scat(K).\]
\end{proof}

\begin{corollary}
	$\tc(K) \leq \scat(K^2)$. 
\end{corollary}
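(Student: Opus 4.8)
The plan is to obtain this inequality for free by chaining together two results already established in the paper, so the proof will be essentially a two-step identification. First I would recall from Theorem~\ref{thm2} that for the projection maps $p_1, p_2 \colon K^2 \to K$ onto the first and second factors we have the exact identity
\[
\tc(K)=\sd(p_1,p_2).
\]
This converts the left-hand side of the desired inequality into a contiguity distance between two concrete simplicial maps whose common domain is the categorical product $K^2$.

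Next I would invoke Corollary~\ref{cor:sdscat}, which bounds the contiguity distance between any two simplicial maps by the simplicial category of their (shared) domain, provided the codomain is edge-path connected. Applying it with $K^2$ playing the role of the domain, $K$ playing the role of the codomain, and $\varphi=p_1$, $\psi=p_2$, yields
\[
\sd(p_1,p_2)\leq \scat(K^2).
\]
The hypothesis needed for Corollary~\ref{cor:sdscat} is that the codomain $K$ be edge-path connected, and this is guaranteed by the standing convention fixed at the start of Section~2 that all simplicial complexes are assumed to be (edge-)path connected; so no extra argument is required to verify it.

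Combining the two displays gives $\tc(K)=\sd(p_1,p_2)\leq\scat(K^2)$, which is exactly the claim. The only point that deserves a word of care — and what I would flag as the closest thing to an obstacle — is making sure the roles are assigned correctly: the bound in Corollary~\ref{cor:sdscat} is in terms of the simplicial category of the \emph{domain} of the two maps, and here that domain is $K^2$ rather than $K$, which is precisely why the right-hand side is $\scat(K^2)$ and not $\scat(K)$. Since both ingredients are already proved, I expect the write-up to be a single displayed chain of (in)equalities with the references to Theorem~\ref{thm2} and Corollary~\ref{cor:sdscat} attached.
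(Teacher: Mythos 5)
Your proposal is correct and follows exactly the paper's own argument: the paper likewise combines Theorem~\ref{thm2} ($\tc(K)=\sd(p_1,p_2)$) with Corollary~\ref{cor:sdscat} applied to $p_1,p_2\colon K^2\to K$, where the domain is $K^2$, to get $\sd(p_1,p_2)\leq\scat(K^2)$. Your added remark about correctly identifying $K^2$ as the domain is exactly the right point of care, and nothing further is needed.
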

\begin{proof}
	If we consider the projection maps $p_1, p_2\colon K^2 \to K$ respectively in Corollary~\ref{cor:sdscat}, we have
	\[ \sd(p_1,p_2) = \tc(K) \leq \scat(K^2).  \]
\end{proof}

\begin{corollary}\label{cor:tc}
	Let $\varphi, \psi\colon K \to K'$ be two simplicial maps. Then $\sd(\varphi,\psi)\leq \tc(K')$.
\end{corollary}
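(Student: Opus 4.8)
The plan is to exhibit $\varphi$ and $\psi$ as the two coordinate projections of a single simplicial map into $(K')^2$, and then to combine Theorem~\ref{thm2} (applied to $K'$) with the precomposition estimate of Proposition~\ref{prop4}. First I would record the instance of Theorem~\ref{thm2} in which the complex is $K'$ rather than $K$: writing $p_1', p_2'\colon (K')^2 \to K'$ for the two projections, this reads $\sd(p_1', p_2') = \tc(K')$. So it suffices to produce a simplicial map $\Phi$ through which $\varphi$ and $\psi$ factor as $p_1'\circ \Phi$ and $p_2'\circ\Phi$.

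Next I would construct $\Phi\colon K \to K' \prod K'$ using the universal property of the categorical product. Since $\varphi,\psi\colon K \to K'$ are both simplicial, the universal property furnishes a unique simplicial map $\Phi$ with $p_1'\circ\Phi = \varphi$ and $p_2'\circ\Phi = \psi$; on vertices it is given by $\Phi(v) = (\varphi(v),\psi(v))$. The single point that needs checking here is that $\Phi$ really is simplicial, and this is exactly the content of the universal property once $\varphi$ and $\psi$ are known to be simplicial maps, so it requires no separate argument.

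Finally I would apply Proposition~\ref{prop4} to the pair $p_1', p_2'\colon (K')^2 \to K'$ with the map $\mu = \Phi\colon K \to (K')^2$, obtaining
\[
\sd(p_1'\circ\Phi,\ p_2'\circ\Phi)\leq \sd(p_1',p_2').
\]
Substituting $p_1'\circ\Phi = \varphi$ and $p_2'\circ\Phi = \psi$ on the left and $\sd(p_1',p_2') = \tc(K')$ on the right then yields $\sd(\varphi,\psi)\leq \tc(K')$, as desired.

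I do not anticipate a genuine obstacle in this argument: once the factorization $\varphi = p_1'\circ\Phi$, $\psi = p_2'\circ\Phi$ is in place, everything is a formal consequence of the two cited results. The only step demanding any care is the verification that $\Phi$ is a well-defined simplicial map into the categorical product $K' \prod K'$, which is precisely where the universal property (as opposed to a naive product of complexes) is needed; this is the analogue of the map $(\varphi,\psi)$ used implicitly in Theorem~\ref{thm:inclusion}.
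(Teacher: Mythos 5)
Your proof is correct and is essentially the paper's argument: both factor $\varphi$ and $\psi$ through the projections $p_1,p_2\colon K'\prod K' \to K'$ and conclude via the precomposition inequality of Proposition~\ref{prop4} together with $\sd(p_1,p_2)=\tc(K')$ from Theorem~\ref{thm2}. In fact your write-up is slightly more careful than the paper's: your $\Phi$ is exactly $(\varphi\prod\psi)\circ\Delta_K$, the map the paper leaves implicit when it writes $\varphi\prod\psi$ on $K\prod K$ yet treats the distance as one between maps out of $K$, and you cite the correct proposition (the paper's reference to Proposition~\ref{prop:distancerelation} there is evidently a slip for Proposition~\ref{prop4}).
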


\begin{proof}
Consider 
\begin{equation*}
\xymatrix{
K \prod K  \ar[r]^{\varphi \prod \psi}  & K' \prod K'   \ar@/^/[r]^{p_1}
\ar@/_/@{->}[r]_{p_2}  &  K'}
\end{equation*}
where each $p_i$  is a projection map for $i=1,2$. Then, using Proposition~\ref{prop:distancerelation}, we have 
\begin{align*}
\sd(\varphi, \psi)= \sd \big(p_1\circ(\varphi \prod \psi), p_2\circ(\varphi \prod \psi)\big) \leq \sd(p_1, p_2) = \tc(K').
\end{align*}
\end{proof}

\begin{remark}
	Observe that Theorem~\ref{thm:collapsible} also follows from Corollaries \ref{cor:sdscat} and \ref{cor:tc}.
\end{remark}

\begin{proposition} \label{prop:composition}
   Let $K, K'$, and $K''$ be simplicial complexes, $\eta,\eta'\colon K'' \to K$ and $\varphi,\psi\colon K \to K'$ be simplicial maps. If $\varphi \circ \eta' \sim \psi \circ \eta'$, then $\sd(\varphi\circ \eta, \psi \circ \eta) \leq \sd(\eta,\eta')$. 
\end{proposition}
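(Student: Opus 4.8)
The plan is to reuse a covering that realizes $\sd(\eta,\eta')$ and to transport the contiguity relations along $\varphi$ and $\psi$. First I would set $\sd(\eta,\eta')=n$, which by definition supplies subcomplexes $K''_0,K''_1,\ldots,K''_n$ covering $K''$ with $\eta|_{K''_j}\sim\eta'|_{K''_j}$ for every $j$. My claim is that this \emph{same} covering witnesses $\sd(\varphi\circ\eta,\psi\circ\eta)\leq n$, so the entire argument reduces to verifying that $(\varphi\circ\eta)|_{K''_j}\sim(\psi\circ\eta)|_{K''_j}$ on each piece.

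The key ingredient is that post-composition with a fixed simplicial map preserves contiguity classes: if $\alpha\sim_c\beta$ then for any simplicial map $\gamma$ one has $\gamma\circ\alpha\sim_c\gamma\circ\beta$, since $\gamma\big(\alpha(\sigma)\cup\beta(\sigma)\big)=\gamma(\alpha(\sigma))\cup\gamma(\beta(\sigma))$ is a simplex whenever $\alpha(\sigma)\cup\beta(\sigma)$ is, and iterating along a contiguity sequence upgrades this from $\sim_c$ to $\sim$. This is exactly the mechanism already exploited in Proposition~\ref{prop:homotopy}. With this in hand, I would chain the relations on each $K''_j$ as follows:
\begin{align*}
(\varphi\circ\eta)|_{K''_j} &= \varphi\circ(\eta|_{K''_j}) \sim \varphi\circ(\eta'|_{K''_j}) = (\varphi\circ\eta')|_{K''_j} \\
&\sim (\psi\circ\eta')|_{K''_j} = \psi\circ(\eta'|_{K''_j}) \sim \psi\circ(\eta|_{K''_j}) = (\psi\circ\eta)|_{K''_j}.
\end{align*}
Here the first and last $\sim$ come from post-composing the relation $\eta|_{K''_j}\sim\eta'|_{K''_j}$ with $\varphi$ and with $\psi$ respectively, while the middle $\sim$ is the restriction to $K''_j$ of the global hypothesis $\varphi\circ\eta'\sim\psi\circ\eta'$. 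Since $\sim$ is an equivalence relation, transitivity closes the chain and gives $(\varphi\circ\eta)|_{K''_j}\sim(\psi\circ\eta)|_{K''_j}$, which is precisely what the covering needs to satisfy.

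The conceptual crux, and the only non-formal step, is recognizing that the hypothesis must be used as a \emph{bridge through} $\eta'$: there is no direct information comparing $\varphi\circ\eta$ with $\psi\circ\eta$, so one routes the comparison through $\varphi\circ\eta'$ and $\psi\circ\eta'$, where the global assumption applies. Once that routing is identified, the rest is bookkeeping with transitivity. A minor point I would confirm in passing is that restriction respects contiguity classes, i.e. $f\sim g$ implies $f|_L\sim g|_L$ for a subcomplex $L$; this is needed to restrict the hypothesis $\varphi\circ\eta'\sim\psi\circ\eta'$ down to each $K''_j$, and it is used implicitly elsewhere in the paper (for instance in the proof of Proposition~\ref{prop:contiguity}).
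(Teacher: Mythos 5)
Your proposal is correct and follows essentially the same route as the paper's own proof: take the covering realizing $\sd(\eta,\eta')$, post-compose the relations $\eta|_{K''_j}\sim\eta'|_{K''_j}$ with $\varphi$ and $\psi$, bridge through the hypothesis $\varphi\circ\eta'\sim\psi\circ\eta'$, and close by transitivity. Your version is in fact slightly more careful than the paper's, since you explicitly justify that post-composition and restriction preserve contiguity classes, steps the paper leaves implicit.
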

\begin{proof}
	Let $\sd(\eta,\eta')=n$. Then there exists a covering $\{L_0,L_1,\dotsc, L_n\}$ for $K''$ such that $\eta{\big|}_{L_i} \sim \eta'{\big|}_{L_i}$ for $i=1,2,\dotsc,n$.
	\begin{align*}
	\eta{\big|}_{L_i} &\sim \eta'{\big|}_{L_i}\\
	\varphi \circ \eta{\big|}_{L_i} &\sim \varphi \circ \eta'{\big|}_{L_i}
	\end{align*}
	\begin{align*}
	\eta{\big|}_{L_i} &\sim \eta'{\big|}_{L_i}\\
	\psi \circ \eta{\big|}_{L_i} &\sim \psi \circ \eta'{\big|}_{L_i}
	\end{align*}
	Since $\varphi \circ \eta' \sim \psi \circ \eta'$, by the transitivity of $\sim$ we have  $\varphi \circ \eta{\big|}_{L_i} \sim 	\psi \circ \eta{\big|}_{L_i}$ and this completes our proof.\\
\end{proof}

\section{Acknowledgement} 

The authors  thank  Enrique Maci\'{a}s-Virg\'{o}s and  David Mosquera-Lois for their helpful comments and suggestions. The second author was partially supported by the Scientific and Technological Research Council of Turkey (T\"{U}B\.{I}TAK) [grant number 11F015].

\end{document}